\documentclass[gdweb]{geradwp}

\PassOptionsToPackage{hyphens}{url}

\usepackage[french,english]{babel}
\usepackage{hyperref}
\usepackage{tikz}
\usepackage{todonotes}
\usetikzlibrary{arrows.meta, positioning}
\usepackage{tikz}
\usepackage{amsmath}
\usepackage{amssymb}
\usepackage{amsthm}  
\usepackage{url}
\usepackage{placeins}

\usepackage{hyperref}
\usetikzlibrary{positioning, shapes.geometric, arrows.meta, decorations.pathreplacing}
\usepackage[sort,numbers]{natbib}
\usepackage{amsthm}
\theoremstyle{plain}
\newtheorem{corollary}{\sffamily Corollary}

\newtheorem{proposition}{\sffamily Proposition}

\theoremstyle{definition}

\theoremstyle{remark}
\newtheorem{remark}{\sffamily Remark}

\newcolumntype{C}[1]{>{\centering\arraybackslash}p{#1}}

\usepackage{tikz}
\usepackage{float}
\usepackage{rotating}
\usepackage{adjustbox}
\usepackage{makecell}
\usepackage{multicol}
\usepackage{amsmath}
\usepackage{amssymb}
\usetikzlibrary{positioning,chains,fit,shapes,calc,arrows.meta,                        
	  backgrounds,                        
	  ext.paths.ortho,                    
	  ext.positioning-plus,               
	  }
\usepackage{footmisc}
\usepackage{pgfplots}
\pgfplotsset{compat=1.18}
\usetikzlibrary{spy}
\usepackage{newtxtext,newtxmath}
\usepackage{svg}
\usepackage{fontawesome}
\usepackage{amsthm}
\usepackage{mathtools}
\usepackage{enumitem}
\usepackage{soul}
\usepackage{graphicx}
\usepackage{longtable}
\usepackage{algorithm}
\usepackage{algorithmic}
\usepackage{graphicx}   
\usepackage{tikz}
\usetikzlibrary{arrows.meta}
\usetikzlibrary{shapes.geometric, arrows}
\soulregister\cite7
\soulregister\ref7
\soulregister\pageref7
\tikzstyle{startstop} = [rectangle, rounded corners, minimum width=3cm, minimum height=1.2cm, text centered, draw=black, fill=gray!50]
\tikzstyle{process} = [rectangle, minimum width=4cm, minimum height=1.2cm, text centered, draw=black, fill=white]
\tikzstyle{repeated} = [rectangle, minimum width=4cm, minimum height=1.2cm, text centered, draw=black, fill=gray!20, dashed]
\tikzstyle{decision} = [diamond, minimum width=3cm, minimum height=1.5cm, text centered, draw=black, fill=gray!30]
\tikzstyle{arrow} = [thick,->,>=stealth]

\GDcoverpagewhitespace{6.8cm}
\graphicspath{{Figures/}} 
\hypersetup{colorlinks,%
citecolor={blue}, 
urlcolor={blue},
linkcolor={blue},
breaklinks={true}
}

\makeatletter
\ifthenelse{\isundefined{\ALG@name}}{}%
{%
\renewcommand{\ALG@name}{\sffamily\footnotesize Algorithm}
}
\makeatother
\ifthenelse{\isundefined{\SetAlCapNameFnt}}{}%
{%
\SetAlCapNameFnt{\footnotesize}
\SetAlCapFnt{\sffamily\footnotesize}
}

\GDauthorsShort{A.~Abdellaoui, L.~Benabbou, I.~El~Hallaoui}
\GDauthorsCopyright{Abdellaoui, El~Hallaoui, Benabbou}
\GDpostpubcitation{Hamel, Benoit, Karine H\'ebert (2024). 
``Un exemple de citation'', \emph{Journal of Journals}, 
vol. X issue Y, p. n-m}{https://www.gerad.ca/fr}
\GDsupplementname{Internet Appendix}
\GDrevised{Mai}{May}{2024}

\title{\textbf{Real-World, Large Scale, Multi-Period Log Truck 
Routing and Scheduling : Application to Canadian Forestry}}

\author{
Abdelhakim Abdellaoui$^{1,3,}$\thanks{Corresponding author: abdelhakim.abdellaoui@polymtl.ca} \quad
Loubna Benabbou$^{2}$ \quad
Issmail El Hallaoui$^{1}$ 
 \\
 Fran\c{c}ois Aub\'e$^{3}$\quad
Mouloud Amazouz$^{3}$
\\[1em]
{\small \textit{$^{1}$Department of Mathematics and Industrial Engineering, Polytechnique Montr\'eal, succ. Centre-ville,Montr\'eal, Qu\'ebec, H3C 3A7, Canada}} \\
{\small \textit{$^{2}$Department of Management Science, Université du Québec à Rimouski (UQAR), Levis, QC G6V 0A6, Qu\'ebec, Canada}}\\
{\small\textit{ $^{3}$Natural Resources Canada, CanmetENERGY, 1615 Lionel-Boulet Blvd, P.O. Box 4800, Varennes, Qu\'ebec, J3X 1P7, Canada}} 
}
\begin{document}
\date{}
\maketitle



\begin{GDabstract}{Abstract}
This paper addresses the multi-period log-truck routing and 
scheduling problem ($\mathcal{LTRSP}$), a key operational 
activity in the forestry industry, where transportation 
accounts for more than one-third of total operational costs. 
The Canadian forestry sector faces significant logistical 
difficulties driven by vast geographic distances, seasonal 
variability, volatile markets, and environmental considerations. 
Our research tackles a long-standing open question in the 
forestry operations literature, namely the absence of an exact 
and scalable formulation of the forestry vehicle routing problem 
integrating the full range of operational constraints 
\cite{ronnqvist2015operations}: \textit{How can we model and 
solve an exact formulation of the forestry VRP problem?} 
In response, we analyze business rules specific to the forestry 
sector to construct a routing network reflecting real operational 
practices, and then propose a comprehensive improved 
mixed-integer linear programming (MILP) formulation 
incorporating all known forestry operational constraints, 
with a detailed justification of key modeling choices such as 
time discretization, along with a decomposition-based solution 
methodology tailored for large-scale, multi-period industrial 
instances. To address the inherent combinatorial complexity, 
we combine state-of-the-art solvers with a metaheuristic 
decomposition strategy based on \textit{Relax\&Fix} and 
\textit{Fix\&Optimize}. Computational experiments on historical 
data from a Canadian forest company demonstrate near-optimal 
results within practical computation times, with significant 
financial gains and reduced greenhouse gas emissions.
\end{GDabstract}
\vspace{-2cm}
\paragraph{Keywords:} Log truck, routing and scheduling, Forest site, Mill, Home base.
\section{Introduction}
In Canada, the forestry sector constitutes one of the primary drivers of economic activity. According to Canada's Forests Annual Report 2023 
\cite{NRCan2022}, nominal GDP from forest-related activities reached 33.4\$ billion in 2022, with logging operations contributing 
\$5.9 billion. This sector not only generates economic value but also supports employment and sustainable development \cite{NRCan2022}. Such significance underscores the pressing need for efficient transportation, particularly as the industry faces evolving challenges. Log transportation in Canada is subject to a considerable number  of challenges. With nearly 226 million hectares of managed forest  land \cite{NRCan2022}, the geographical scale of harvesting operations  requires an extensive transportation infrastructure. Road accessibility  is further affected by climate conditions, as winter and spring  periods periodically restrict or degrade transportation routes. At the same time, the sector faces growing pressure to meet  environmental sustainability requirements, adding further constraints  to an already complex operational setting.

In front of these challenges, Canadian forest companies strongly prioritize cost reduction in order to maintain adequate profit 
margins and remain competitive in an increasingly volatile market \cite{StatCan2023}.  Log transportation makes up to 40 \% of the expenses for a typical forest company \cite{audy2013virtual}. Consequently, optimizing transportation costs can be considered as a potential field to bring economic added value. In addition, reducing transportation costs by minimizing the total distance traveled is inherently linked to a reduction in greenhouse gas emissions, supporting environmental sustainability. The ultimate goal of optimizing log-truck routing and scheduling fits within this perspective, in addition to ensuring operational efficiency. Furthermore, truck-based logistics is the primary mode of transport adopted by forestry companies \cite{StatCan2023}. To address all these considerations, the forestry sector is increasingly looking at digitization and optimization of their logistic operations through advanced techniques. The current business model involves subcontracting harvesting and wood transportation to contractors. Studies have shown that a more centralized decision-making system improves operational efficiency \cite{damavsevivcius2024digital}. In this context, some companies are turning to advanced operations research methods to design optimal and centralized transportation planning solvers. Moreover, the literature highlights that numerous open questions 
within forestry logistics remain to be addressed by the operations research community \cite{ronnqvist2015operations}. These primarily involve developing and solving an exact model that incorporates various critical aspects, such as managing queuing at loading and unloading locations, synchronizing trucks without self-loading capabilities with loaders at supply and demand points, and handling a combination of full and partial truckloads \cite{yahiaoui2024mathheuristic}. Considering the aforementioned challenges, this paper aims to design an end-to-end framework to tackle a rich problem that integrates key forestry-related aspects, focusing on a large-scale, multi-period log-truck routing and scheduling problem. Following comprehensive discussions with Canadian forest companies,  we approached this problem from their perspective. Our goal is to  provide a practical and effective solution that facilitates transportation planning, a process that currently relies heavily  on manual and decentralized decision-making. The complete framework we propose begins with an improved mathematical model formulation grounded in standard practices prevalent in the Canadian forest industry. We then implement a generic methodology to generate a valid time-space routing network for any real-world instance. In this context, a routing network is defined as the set of all nodes and edges, while considering some conventional constraints such as product assignments to forest sites and mills, as well as vehicle assignments to home bases. In the next phase, the routing network undergoes a processing methodology according to specific rules to minimize its size and, consequently, reduce the combinatorial space. Finally, we design tailored solution approaches to generate optimal truck routes for a defined planning horizon, ensuring that all business constraints are satisfied. 

The present article has a fourfold contribution: (1) we formulate a mathematical model tailored to the large-scale, multi-period log-truck routing and scheduling problem, incorporating the common 
constraints found in the Canadian forest industry; (2) we discuss the characteristics and sources of complexity of the model and provide mathematical insights into time discretization; (3) we propose 
an efficient solving approach based on decomposition methods, and provide the intuition behind its use; and (4) we evaluate the performance of our proposed approach on several real-world instances and assess its financial, environmental, and operational impact. Through this research, we seek to provide insights that can help the Canadian forestry sector maintain its competitive edge while adapting to the evolving challenges of the 21st century. The findings may have implications not only for Canada but also for other countries with significant forestry industries facing similar logistical challenges. The following section presents an overview of the relevant literature.
\section{Literature review}
\label{lit-rev}
The optimization of log-truck routing and scheduling in forestry has been the subject of many research due to its significant operational, economic, and environmental implications. This review focuses on the evolution of approaches that address this complex problem in terms of formulation and solution methods.

The foundational work in $\mathcal{LTRSP}$ emerged in the 1980s, emphasizing the attributes that distinguish these problems from general VRPs, such as full and partial truckloads, the need for multi-depot planning, and synchronization constraints at supply and demand sites \cite{audy2023review}. The research then expanded significantly, incorporating more real-world constraints \cite{ronnqvist2003optimization}. Since the 1990s, many researchers have proposed various formulations and solving techniques in this field. Weintraub et al. \cite{weintraub1996simulation} developed a heuristic-based decision-making tool known as ASICAM, which integrated simulation with heuristic rules to create truck routes and schedules while managing queuing at supply and demand locations. This system was particularly effective in minimizing wait times, but was limited by its heuristic nature, which lacked mechanisms for iterative solution refinement. The work of Rönnqvist and al. \cite{ronnqvist2015operations} emphasized critical challenges in $\mathcal{LTRSP}$ modeling, such as handling full and partial truckloads, synchronization between loading equipment and vehicle schedules, and real-time adjustments.

Further advancements came through network-based models and exact solution methods. Shen et al. \cite{shen1978log} presented a capacitated linear programming network flow model, focusing on the optimal distribution of truck routes to minimize transportation costs under time window constraints. Robinson et al. \cite{robinson1994tour} expanded on this by incorporating heterogeneous vehicle fleets and multi-origin, multi-destination problems, showing the flexibility of network flow models in handling complex transportation networks.

The use of decomposition methods gained traction as a way to tackle the inherent complexity of $\mathcal{LTRSP}$. El Hachemi et al. \cite{el_hachemi2013decomposition} introduced a two-phase decomposition approach for weekly tactical scheduling, integrating mixed-integer programming (MIP) for strategic allocation and constraint programming (CP) for detailed daily routing and queuing time optimization. This combination allows efficient synchronization of truck and loader operations, addressing a major limitation of earlier heuristic-based methods.

Later, the limitations of exact methods to solve large-scale $\mathcal{LTRSP}$ led to the development of heuristic and hybrid approaches. Palmgren et al. \cite{palmgren2004column} utilized a column generation method to solve a combined timetabling and routing problem. This method effectively embedded supply and demand constraints, but faced computational challenges during the pricing phase, particularly when addressing resource-constrained shortest path problems. Yahiaoui et al.\cite{yahiaoui2024mathheuristic} introduced a metaheuristic approach for the $\mathcal{LTRSP}$ with queuing considerations ($\mathcal{LTRSP}$Q). This two-phase method involved generating an initial solution pool using a rolling horizon heuristic, followed by a column generation-based procedure to refine solutions by minimizing queuing times. The innovative aspect of this approach was its adaptive time slot allocation, which optimized the scheduling of loading and unloading operations. Dynamic and stochastic models have emerged to better represent real-world uncertainties, such as changes in log availability, delivery demands, and vehicle breakdowns. Rönnqvist at al.\cite{ronnqvist1995real} proposed a method for real-time truck dispatching that incorporated dynamic data updates. Similarly, Marques et al. \cite{marques2014hybrid} combined optimization and simulation to manage the stochastic elements of truck arrivals, delays, and queuing, effectively bridging the gap between strategic planning and operational execution.

Despite these advances, significant challenges remain unaddressed. Computational efficiency continues to be a barrier, especially when large-scale problems are addressed. Moreover, existing solutions lack a complete integration of all forestry challenges within a single solution. Key elements such as multi-horizon planning, synchronization, adherence to the most important business constraints, and flexibility for rescheduling in response to unforeseen events are often missing, as underscored in \cite{audy2023planning}. This table sums up the contributions the most significant works in this field:
\begin{table}[h!]
    \centering
    \renewcommand{\arraystretch}{1.2}
    \setlength{\tabcolsep}{5pt}
    \begin{adjustbox}{max width=\textwidth}
    \begin{tabular}{lcccccccccccc}
        \toprule
        \textbf{Paper} & \textbf{Horizon planning} & \textbf{Multi-Period} & \textbf{Hetero.Fleet} & \textbf{Time Windows} & \textbf{Multi-products} & \textbf{Queuing} & \textbf{Incomplete trip} & \textbf{Method} & \textbf{Partial load} & \textbf{Priority rules} & \textbf{Self loading trucks} & \textbf{Pairing} \\
        \midrule
        \textit{Weintraub et al. \cite{weintraub1996simulation} (1996)} & daily & & $\checkmark$ & & $\checkmark$ & $\checkmark$ & & Heuristic & $\checkmark$ & $\checkmark$ & & \\
        \textit{Palmgren et al. \cite{palmgren2004column} (2004)} & daily & & & $\checkmark$ & $\checkmark$ & & & Exact & $\checkmark$ & & & \\
        \textit{Rey et al. \cite{rey2009column} (2009)} & daily & & $\checkmark$ & $\checkmark$ & $\checkmark$ & $\checkmark$ & & Exact & & & & \\
        \textit{El Hachemi et al. \cite{el2011hybrid} (2011)} & daily & & $\checkmark$ & $\checkmark$ & $\checkmark$ & $\checkmark$ & & Hybrid & $\checkmark$ & & & \\
        \textit{El Hachemi et al. \cite{el_hachemi2013decomposition} (2013)} & daily & & $\checkmark$ & $\checkmark$ & $\checkmark$ & $\checkmark$ & & Exact & & & & \\
        \textit{Haridass et al. \cite{haridass2014scheduling} (2014)} & daily & & $\checkmark$ & $\checkmark$ & & $\checkmark$ & & Exact & $\checkmark$ & & & \\
        \textit{Rix et al. \cite{rix2015column} (2014)} & Annualy & $\checkmark$ & $\checkmark$ & & $\checkmark$ & $\checkmark$ & $\checkmark$ & Exact & & & $\checkmark$ & \\
        \textit{Acuna et al. \cite{acuna2017timber} (2017)} & weekly & $\checkmark$ & $\checkmark$ & & $\checkmark$ & $\checkmark$ & $\checkmark$ & Exact & $\checkmark$ & & & \\
        \textit{Bordon et al. \cite{bordon2020mixed} (2020)} & weekly & $\checkmark$ & $\checkmark$ & & $\checkmark$ & & $\checkmark$ & Exact & $\checkmark$ & & & \\
        \textit{Melchiori et al. \cite{melchiori2022mathematical} (2022)} & weekly & $\checkmark$ & $\checkmark$ & & $\checkmark$ & $\checkmark$ & $\checkmark$ & Exact & $\checkmark$ & $\checkmark$ & & \\
        \textit{Ghotb et al. \cite{ghotb2024optimization} (2022)} & daily & & $\checkmark$ & $\checkmark$ & & & $\checkmark$ & Metaheuristic & $\checkmark$ & & & \\
        \textit{Our paper} \cite{ghotb2024optimization} & weekly & $\checkmark$ & $\checkmark$ & $\checkmark$ & $\checkmark$ & $\checkmark$ & $\checkmark$ & Exact/Metaheuristic & $\checkmark$ & $\checkmark$ & $\checkmark$ & $\checkmark$ \\
        \bottomrule
    \end{tabular}
    \end{adjustbox}
    \caption{Characteristics summary of past studies vs this paper.}
    \label{tab:summary}
\end{table}

These gaps were highlighted by business stakeholders who have tested various approaches in the real-world context. The existing solutions fail to capture the full complexity of real-world forestry operations. Consequently, both industry experience and academic literature indicate a strong trend toward integrated approaches that address all forestry challenges, taking into account the deployment of algorithmic solutions using user-friendly interfaces.

The primary objective of this paper is to outline the key elements of such an integrated approach, aimed at tackling the common challenges in forestry transportation planning. Specifically, this paper presents an end-to-end framework for truck-log routing and scheduling. Future contributions will focus on the development of a rescheduling layer to adapt to unforeseen events.

\section{Problem description}
Given our intention in this paper to present a general approach to formulate and solve $\mathcal{LTRSP}$, we divide this section into two parts. The first part introduces the used notation throughout the paper. Then, the second one sheds light on the Canadian forestry context and the problem statement.
\label{prob-des}

\subsection{Mathematical notation}
We first provide a detailed overview of the mathematical notation used throughout this paper. The sets, indices, parameters and decision variables
are introduced in Table \ref{tab1}.
\begin{table}[H]
\centering
\caption{Model sets and parameters}
\renewcommand{\arraystretch}{1.3}
\begin{tabular}{ll}
\toprule
\textbf{Symbol} & \textbf{Definition} \\
\midrule
$F$ & set of forest sites \\
$M$ & set of mills \\
$V$ & set of trucks \\
$P$ & set of wood products \\
$I$ & set of time intervals \\
$N$ & set of nodes representing mills, forest sites, and home bases \\
$\mathcal{HB}_v$ & set of eligible home bases for truck $v$ \\
$T$ & set of days in the planning horizon \\
$A$ & set of arcs connecting pairs of nodes across time intervals \\
$A^+(n)$ & set of arcs leaving node $n$ \\
$A^-(n)$ & set of arcs entering node $n$ \\
$A_{f}^{mp}$ & set of loaded arcs from forest site $f$ to mill $m$ for product $p$ \\
$Source_v$ & origin node for truck $v$ (departure from home base) \\
$Sink_v$ & destination node for truck $v$ (return to home base) \\
$A_{fi}^{L}$ & loading arcs at forest site $f$ at time interval $i$ \\
$A_{mi}^{U}$ & unloading arcs at mill $m$ at time interval $i$ \\
$c_{a}^{h}$ & hauling cost for arc $a$ traversed by vehicle $v$ \\
$c_{a}^{w}$ & waiting cost for arc $a$ traversed by vehicle $v$ \\
$c_{mp}$ & penalty cost per unit of unmet demand for product $p$ at mill $m$ \\
$\tau_{a}^{+}$ & ending time interval of arc $a$ \\
$\tau_{a}^{-}$ & starting time interval of arc $a$ \\
$t_f$ & last day of horizon $T$ \\
$d_{mp}$ & weekly demand for product $p$ at mill $m$, expressed in GMT \\
$q_{v}$ & payload capacity of vehicle $v$, expressed in GMT \\
$K_{v}$ & maximum number of trips allowed for vehicle $v$ \\
$\beta_{v}$ & indicator equal to 1 if vehicle $v$ lacks self-loading capability \\
$s_{fp}$ & available supply of product $p$ at forest site $f$, expressed in GMT \\
$S_{t}$ & service duration, corresponding to loading time at forest sites 
          and unloading time at mills \\
$\Lambda_{mi}$ & loader capacity at mill $m$ during time interval $i$ \\
$\Lambda_{fi}$ & loader capacity at forest site $f$ during time interval $i$ \\
$T_{a}^{v}$ & travel duration for arc $a$ by vehicle $v$ \\
$T^o_{n}, T^c_{n}$ & operating window (opening and closing times) at node $n$ \\
\textit{BigM} & sufficiently large constant \\
\bottomrule
\end{tabular}
\label{tab1}
\end{table}
\subsection{Canadian forestry context and problem statement}
 The study is tailored to the general context of the wood industry, with a particular focus on the Canadian forestry sector. In collaboration with a Canadian forestry company, we model and solve a vehicle routing problem formulation that captures the key operational constraints of $\mathcal{LTRSP}$. Our objective is to incorporate, to the greatest extent possible, the constraints faced by a broad range of forestry operations, thereby enabling the development of a generic solution approach. This comprehensive modeling framework differentiates our work from existing studies in the literature.

In the context of $\mathcal{LTRSP}$,  a route consists of an ordered sequence of loaded and empty trips, each associated with defined departure and arrival times. Each route must begin and end at a corresponding home base, typically the truck operator’s residence or yard. Specifically, three types of empty trips are considered: a truck may travel from its home base to a forest site, from a mill to a forest site to pick up new products, or from a mill directly back to its home base at the end of the day. In addition, only one type of loaded trip exists, which is from a forest site to a mill. In rare instances, drivers may transport a loaded truck from a forest site back to their home base to spend the night, continuing the trip to the mill the following morning. Figure  ~\ref{fig:routes} represent different possible combinations of loaded and empty trips.

\tikzset{
  loaded/.style={->, very thick, draw=violet!70!black},
  empty/.style={->, very thick, draw=orange!85!black},
  startend/.style={->, thick, dashed, draw=blue!70!black}
}

\begin{figure}[h!]
\centering

\begin{minipage}[b]{0.45\textwidth}
\centering
\textbf{(a)}\\[0.3em]
\begin{tikzpicture}[scale=0.35, every node/.style={scale=0.4}]
    \node[draw,circle,fill=brown,inner sep=2pt] (Source) at (-1,-2.5) {}; 
    \node[anchor=east] at (Source) {Home base $HB$};
    \node[draw,circle,fill=brown,inner sep=2pt] (Sink) at (13,-2.5) {};
    \node[anchor=west] at (Sink) {Home base $HB$};

    \foreach \y in {0,...,13} {
        \node[draw,circle,fill=brown,inner sep=1.5pt] (F\y) at (\y,-4) {};
        \node[draw,circle,fill=brown,inner sep=1.5pt] (M\y) at (\y,-6) {};
    }

    \node[anchor=east] at (F0) {Forest site $F$};
    \node[anchor=east] at (M0) {Mill $M$};

    \draw[startend] (Source) -- (F0);
    \draw[loaded]   (F1) -- (M3);
    \draw[empty]    (M4) -- (F7);
    \draw[loaded]   (F8) -- (M10);
    \draw[startend] (M11) -- (Sink);

    \draw[->, dashed, very thick, draw=red!75!black,
          shorten <=2pt, shorten >=2pt] (F0)--(F1);
    \draw[->, dashed, very thick, draw=yellow!85!black,
          shorten <=2pt, shorten >=2pt] (M3)--(M4);
    \draw[->, dashed, very thick, draw=red!75!black,
          shorten <=2pt, shorten >=2pt] (F7)--(F8);
    \draw[->, dashed, very thick, draw=yellow!85!black,
          shorten <=2pt, shorten >=2pt] (M10)--(M11);

    \node at ($(F0)!0.5!(F1)+(0,0.85)$)
      {\includegraphics[width=1.5cm]{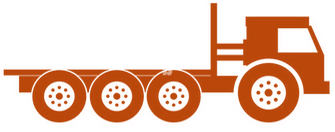}};
    \node at ($(M3)!0.5!(M4)+(0,-0.85)$)
      {\includegraphics[width=1.5cm]{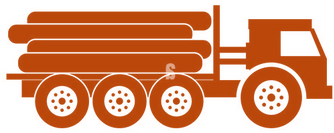}};
    \node at ($(F7)!0.5!(F8)+(0,0.85)$)
      {\includegraphics[width=1.5cm]{loading.png}};
    \node at ($(M10)!0.5!(M11)+(0,-0.85)$)
      {\includegraphics[width=1.5cm]{unloading.png}};
\end{tikzpicture}
\end{minipage}
\hfill
\begin{minipage}[b]{0.45\textwidth}
\centering
\textbf{(b)}\\[0.3em]
\begin{tikzpicture}[scale=0.35, every node/.style={scale=0.4}]
    \node[draw,circle,fill=brown,inner sep=2pt] (Source) at (-1,-2.5) {};
    \node[anchor=east] at (Source) {Home base $HB$};
    \node[draw,circle,fill=brown,inner sep=2pt] (Sink) at (13,-2.5) {};
    \node[anchor=west] at (Sink) {Home base $HB$};

    \foreach \y in {0,...,13} {
        \node[draw,circle,fill=brown,inner sep=1.5pt] (F\y) at (\y,-4) {};
        \node[draw,circle,fill=brown,inner sep=1.5pt] (M\y) at (\y,-6) {};
        \node[draw,circle,fill=brown,inner sep=1.5pt] (Ftwo\y) at (\y,-8) {};
    }

    \node[anchor=east] at (F0) {Forest site $F_1$};
    \node[anchor=east] at (M0) {Mill $M$};
    \node[anchor=east] at (Ftwo0) {Forest site $F_2$};

    \draw[startend] (Source) -- (F0);
    \draw[loaded]   (F1) -- (M3);
    \draw[empty]    (M4) -- (Ftwo7);
    \draw[loaded]   (Ftwo8) -- (M10);
    \draw[startend] (M11) -- (Sink);

    \draw[->, dashed, very thick, draw=red!75!black,
          shorten <=2pt, shorten >=2pt] (F0)--(F1);
    \draw[->, dashed, very thick, draw=yellow!85!black,
          shorten <=2pt, shorten >=2pt] (M3)--(M4);
    \draw[->, dashed, very thick, draw=red!75!black,
          shorten <=2pt, shorten >=2pt] (Ftwo7)--(Ftwo8);
    \draw[->, dashed, very thick, draw=yellow!85!black,
          shorten <=2pt, shorten >=2pt] (M10)--(M11);

    \node at ($(F0)!0.5!(F1)+(0,0.85)$)
      {\includegraphics[width=1.5cm]{loading.png}};
    \node at ($(M3)!0.5!(M4)+(0,-0.85)$)
      {\includegraphics[width=1.5cm]{unloading.png}};
    \node at ($(Ftwo7)!0.5!(Ftwo8)+(0,0.85)$)
      {\includegraphics[width=1.5cm]{loading.png}};
    \node at ($(M10)!0.5!(M11)+(0,-0.85)$)
      {\includegraphics[width=1.5cm]{unloading.png}};
\end{tikzpicture}
\end{minipage}

\par\bigskip

\begin{minipage}[b]{0.5\textwidth}
\centering
\textbf{(c)}\\[0.3em]
\begin{tikzpicture}[scale=0.4, every node/.style={scale=0.4}]
    \node[draw,circle,fill=brown,inner sep=2pt] (Source) at (-1,-2.5) {};
    \node[anchor=east] at (Source) {Home base $HB$};
    \node[draw,circle,fill=brown,inner sep=2pt] (Sink) at (13,-2.5) {};
    \node[anchor=west] at (Sink) {Home base $HB$};

    \foreach \y in {0,...,13} {
        \node[draw,circle,fill=brown,inner sep=1.5pt] (F\y) at (\y,-4) {};
        \node[draw,circle,fill=brown,inner sep=1.5pt] (M\y) at (\y,-6) {};
        \node[draw,circle,fill=brown,inner sep=1.5pt] (Ftwo\y) at (\y,-8) {};
        \node[draw,circle,fill=brown,inner sep=1.5pt] (Mtwo\y) at (\y,-10) {};
    }

    \node[anchor=east] at (F0) {Forest site $F_1$};
    \node[anchor=east] at (M0) {Mill $M_1$};
    \node[anchor=east] at (Ftwo0) {Forest site $F_2$};
    \node[anchor=east] at (Mtwo0) {Mill $M_2$};

    \draw[startend] (Source) -- (F0);
    \draw[loaded]   (F1) -- (M3);
    \draw[empty]    (M4) -- (Ftwo7);
    \draw[loaded]   (Ftwo8) -- (Mtwo10);
    \draw[startend] (Mtwo11) -- (Sink);

    \draw[->, dashed, very thick, draw=red!75!black,
          shorten <=2pt, shorten >=2pt] (F0)--(F1);
    \draw[->, dashed, very thick, draw=yellow!85!black,
          shorten <=2pt, shorten >=2pt] (M3)--(M4);
    \draw[->, dashed, very thick, draw=red!75!black,
          shorten <=2pt, shorten >=2pt] (Ftwo7)--(Ftwo8);
    \draw[->, dashed, very thick, draw=yellow!85!black,
          shorten <=2pt, shorten >=2pt] (Mtwo10)--(Mtwo11);

    \node at ($(F0)!0.5!(F1)+(0,0.85)$)
      {\includegraphics[width=1.5cm]{loading.png}};
    \node at ($(M3)!0.5!(M4)+(0,-0.85)$)
      {\includegraphics[width=1.5cm]{unloading.png}};
    \node at ($(Ftwo7)!0.5!(Ftwo8)+(0,0.85)$)
      {\includegraphics[width=1.5cm]{loading.png}};
    \node at ($(Mtwo10)!0.5!(Mtwo11)+(0,-0.85)$)
      {\includegraphics[width=1.5cm]{unloading.png}};
\end{tikzpicture}
\end{minipage}

\vspace{0.8em}

\begin{tikzpicture}[scale=1]
\draw[loaded] (0,0) -- +(1.6,0)
    node[right=0.05cm] {Loaded trip};
\draw[->, very thick, draw=red!75!black]
    (0,-1) -- +(1.6,0)
    node[right=0.05cm] {Loading operation};
\draw[empty] (6,0) -- +(1.6,0)
    node[right=0.05cm] {Empty trip};
\draw[->, very thick, draw=yellow!85!black]
    (6,-1) -- +(1.6,0)
    node[right=0.05cm] {Unloading operation};
\draw[startend] (2.5,-2) -- +(1.6,0)
    node[right=0.05cm] {Start / End};
\end{tikzpicture}

\caption{Possible truck route combinations in $\mathcal{LTRSP}$: 
(a) a truck repeatedly travels between a single forest site and 
a mill; (b) a truck successively collects products from two 
distinct forest sites and delivers to a single mill; (c) a truck 
delivers products from two distinct forest sites to two separate 
mills. All routes begin and end at the home base.}
\label{fig:routes}

\end{figure}

In Canada, the harvested areas are geographically extensive. For the most common products, such as hardwoods, it is standard practice to load and transport full truckloads by grouping similar product species destined for the same mill. Nevertheless, in certain situations, different assortments of products may be combined within a single truckload to improve truck utilization; this is known as partial truckload, and it happens especially with the remaining objective small quantities of some assortments of products from forest sites. Furthermore, harvesting companies rely on transportation contractors, which requires accounting for a heterogeneous fleet when designing routes. Regulatory constraints impose a maximum driving time per route and contractors may be available for only a limited number of trips per day. In addition, truck availability may be restricted to specific days, as some vehicles are committed to other clients on certain days. This study explicitly considers the maximum driving time constraint and driver swapping, but for simplicity, it does not model driver rest periods. The availability of loaders at forest sites and mills is limited, which is a critical aspect for business stakeholders seeking to minimize truck waiting times at these locations. Similarly, each location operates within defined opening and 
closing times, including home bases, forest sites, and mills. To preserve the generic nature of the proposed approach, as emphasized in response to \cite{ronnqvist2015operations}, the model incorporates a set of priority rules reflecting business-driven operational constraints. In practice, certain trips are not permitted for business reasons; for example, specific products originating from given forest blocks may be exclusively assigned to designated mills. The main business rules considered in this study are the following:
\begin{itemize}
    \item[$R_1$:] All supply from a forest block is dedicated 
    to a specific mill.
    \item[$R_2$:] Some vehicles are dedicated to forest blocks 
    within a specific region.
    \item[$R_3$:] Certain mills are accessible only to specific 
    vehicle configurations.
    \item[$R_4$:] Some products from certain forest blocks are 
    dedicated to specific mills.
\end{itemize}

In real-world settings, truckers who partner with forest companies—including the industrial partner considered in this study—typically perform planning weekly, consistent with demand and supply information that is also specified weekly. More specifically, the industrial partner operates a supply network comprising approximately 50 mills that must be replenished with a variety of lumber products sourced from nearly 2270 forest blocks, of which around 30 are harvested and utilized on a daily basis. This operation relies on a heterogeneous fleet of approximately 299 log trucks with comparable capacity limits (30~GMT and 35~GMT), operated by about 59 independent transportation contractors. Trucks are classified into two main categories depending on whether they are equipped with self-loading capabilities. Within each category, four vehicle configurations are available —Quad, Tri, Tri-Drive, and B-Train— resulting in a total of eight distinct truck types. Payload capacities vary across configurations and are further affected by seasonal restrictions, road weight limits, and product densities. Truck speeds are also heterogeneous, depending on vehicle configuration, road conditions, and seasonal effects. Moreover, transportation availability is dynamic, as trucks may be temporarily leased to competitors within the forestry sector or to other industries, such as mining. Mills place weekly orders that may involve multiple product types. In total, five product families are considered: softwood, hardwood, poplar, birch, and cedar.

Through discussions with business stakeholders, two primary concerns were identified: transportation cost minimization and reduction of greenhouse gas emissions. Emissions are directly proportional to fuel consumption and are therefore closely correlated with transportation costs. The goal of this study is to establish a centralized weekly planning process that produces weekly transportation schedules, while daily planning decisions are derived from a higher-level tactical plan. Consequently, transportation activities must be coordinated across multiple days to ensure feasibility and service continuity over the planning horizon. As a result, the cost to be minimized during a week is defined as comprising three components: the first component addresses transportation costs, the second minimizes waiting time at forest sites and mills, and the third penalizes unsatisfied demand at mills.

\section{Routing representation }

Discussions with forestry companies highlighted a critical operational challenge, in particular synchronization of trips. Forest sites and mills are equipped with a limited number of loaders; therefore, to prevent excessive waiting times, the number of trucks at any site must not exceed the available loading or unloading equipment. To address this issue, we opted for a time-space formulation of the problem based on time discretization. Specifically, we divide the operating time into fixed intervals using a specific discretization value that will be highlighted in Section \ref{queue}. This enables us to introduce constraints that limit the number of trucks at each site during each time interval, along with additional operational constraints. Incorporating this time component helps to avoid queues at mills and forest sites. Figure~\ref{fig6} presents the time--space representation 
of $\mathcal{LTRSP}$ at mill $M^*$, where the operating time is divided into 12 equal intervals.
Before introducing the \textit{MILP}, we design a routing network consisting of nodes and arcs. Each node in the network corresponds to a specific location, namely a home base, a forest site, or a mill, at a given time interval $i$, thereby combining spatial and temporal dimensions within a unified representation. Arcs connect node pairs and define the incoming and outgoing flow at each location. 
At forest sites, outgoing arcs allow a truck to wait for a loader or begin loading \cite{amrouss2017real}. 
At mills, waiting and unloading are modeled separately, as illustrated in Figure~\ref{fig6} for mill~$M^*$ at time interval $i=2$, where $a_w$ and $a_u$ denote the waiting and unloading arcs respectively.

\begin{figure}[ht]
    \centering
    \begin{tikzpicture}[xscale=0.5, yscale=0.4, every node/.style={scale=0.5}]

        \draw[thick] (-0.5,-2) -- (14,-2) node[anchor=north] {Time};
        \foreach \x in {0,1,2,3,4,5,6,7,8,9,10,11,12} {
            \draw (\x,-1.8) -- (\x,-2);
            \node at (\x,-1.5) {\tiny$i=\x$};
            \node[draw,circle,fill=brown,inner sep=1.5pt] (MT\x) at (\x,-2.5) {};
        }

        \draw[thick] (-0.5,-7) -- (14,-7) node[anchor=north] {Time};
        \foreach \x in {0,1,2,3,4,5,6,7,8,9,10,11,12} {
            \draw (\x,-6.8) -- (\x,-7);
            \node at (\x,-6.5) {\tiny$i=\x$};
            \node[draw,circle,fill=brown,inner sep=1.5pt] (MB\x) at (\x,-6) {};
        }

        \node[anchor=west] at (-3,-4.25) {Mill $M^{*}$};
        \node at (-1.25,-4.1)
            {\includegraphics[width=0.9cm]{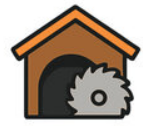}};


        \draw[->, very thick, draw=purple!80!black] (2,-6) -- (3,-6);
        \node at (2.5,-5.7) {$a_w$};

        \draw[->, very thick, draw=yellow!85!black] (2,-6) -- (3,-2.5);
        \node at (2.2,-4) {$a_u$};


        \node at (3,-1.1)
            {\includegraphics[width=0.9cm]{loading.png}};

        \node at (3,-7.8)
            {\includegraphics[width=0.9cm]{unloading.png}};

        \node[draw, fill=green, minimum size=4pt] at (3,-2.5) {};
        \node[draw, fill=green, minimum size=4pt] at (3,-6) {};

    \end{tikzpicture}

    \caption{Time--space network at mill $M^{*}$, illustrating the conventional arcs associated with loading, unloading, and waiting operations.}
    \label{fig6}
\end{figure}
Two categories of arcs are defined within the routing network. The first category, referred to as real arcs, links locations across time intervals, with associated costs reflecting travel time, which is generally proportional to distance. The second category, referred to as conventional arcs, models 
on-site operations such as loading, unloading, or waiting, with costs determined by the time required for each operation and any waiting time incurred. We only consider feasible incoming and outgoing arcs for each node in terms of travel-time consistency.

Technically, for each arc, the expected truck arrival time is computed, and any arc whose destination node falls at a time interval prior to that arrival is discarded as infeasible. Figure \ref{fig7} highlights these excluded arcs in red, since a truck travels from $M^\star$ at $i=4$ arrives at $F^\star$ at $i=8$.
\begin{figure}[H]
\centering
\resizebox{1\columnwidth}{!}{%
\begin{tikzpicture}[scale=0.8,/.style={scale=0.7}]

    \node[draw,circle,fill=brown,inner sep=2pt] (Source) at (-1,-3) {};
    \node[anchor=east] at (Source) {Home Base};

    \node[draw,circle,fill=brown,inner sep=2pt] (Sink) at (13,-3) {};
    \node[anchor=west] at (Sink) {Home Base};

    \foreach \y in {0,...,13} {
        \node[draw,circle,fill=brown,inner sep=1.5pt] (F\y) at (\y,-4) {};
    }

    \foreach \y in {0,...,13} {
        \node[draw,circle,fill=brown,inner sep=1.5pt] (M\y) at (\y,-6) {};
    }

    \node[anchor=east] at (F0) {Forest Site $F^{*}$};
    \node[anchor=east] at (M0) {Mill $M^{*}$};


    \draw[->, thick, dashed, draw=blue!70!black] (Source) -- (F0);

    \draw[->, very thick, draw=violet!75!black] (F0) -- (M3);

    \draw[->, very thick, draw=orange!85!black] (M3) -- (F7);

    \draw[->, very thick, draw=violet!75!black] (F7) -- (M10);

    \draw[->, thick, dashed, draw=blue!70!black] (M10) -- (Sink);

    \node[draw, fill=green, minimum width=3pt, minimum height=3pt] at (3,-6) {};
    \node[anchor=north] at (3,-6.2) {\small $i=4$};

    \node[draw, fill=green, minimum width=3pt, minimum height=3pt] at (7,-4) {};
    \node[anchor=north] at (7,-4.2) {\small $i=8$};

    \draw[red, thick,dashed, -{Latex[length=2mm]}] (M3) to (F1);
    \draw[red, thick, dashed, -{Latex[length=2mm]}] (M3) to (F2);
    \draw[red, thick, dashed, -{Latex[length=2mm]}] (M3) to (F3);
    \draw[red, thick, dashed, -{Latex[length=2mm]}] (M3) to (F4);
    \draw[red, thick,dashed, -{Latex[length=2mm]}] (M3) to (F5);
    \draw[red, thick, dashed, -{Latex[length=2mm]}] (M3) to (F6);

\end{tikzpicture}
}
\caption{Infeasible arcs between the mill and the forest site are shown in red and removed from the network.}
\label{fig7}
\end{figure}

The time--space representation is modeled as a graph $G = (\mathcal{V}, A)$, where $\mathcal{V} = \{v_{ni}=(n,i) \mid n \in N, \ i \in I_n\}$ is the set of nodes, and $I_n \subseteq I$ is the set of feasible time steps for location $n$ (home bases, forests, mills). The arc set $A$ is partitioned into two subsets, namely real arcs and conventional arcs, such that $A = A_{\mathrm{real}} \cup A_{\mathrm{conv}}$. Real arcs are given by
\[
A_{\mathrm{real}} = \{(v_{n_1i_1},v_{n_2i_2}) \mid n_1, n_2 \in N, \ i_2 \geq i_1 + T_{n_1n_2}\}
\]
and represent truck movements, ensuring arrival at $n_2$ after travel time $T_{n_1n_2}$. Conventional arcs are given by
\[
A_{\mathrm{conv}} = \{(v_{ni},v_{ni+1}) \mid n \in N\}
\]
and account for waiting, loading, or unloading with service time $S_{ni}$. It is worth noting that the generation and processing of this graph explicitly
incorporate business rules such as $R_1$, $R_2$, $R_3$ and $R_4$, which prevent the
creation of arcs lacking operational relevance. As a result, the final graph
$G = (\mathcal{V}, A)$ contains only business-meaningful connections. Moreover,
this graph is clearly directed and acyclic, as formalized
in the following remark :

\begin{remark}
    \label{acyclicity-time-expanded}
    Let $G = (\mathcal{V}, A)$ be a time-expanded transportation graph where:
    \begin{itemize}
        \item Each node $v_{ni} \in \mathcal{V}$ represents a location $n $ at time $i$,
        \item Each arc $a = (v_{ni}, v_{mj}) \in A$ satisfies $i < j$.
    \end{itemize}
    Then $G$ is a directed acyclic graph.
\end{remark}

\begin{proof}
    Suppose, for the sake of contradiction, that $G$ contains a directed cycle $C = \{v_1, v_2, \dots, v_k, v_1\}$.\\
    By construction of the graph, every arc $(v_i, v_j)$ in $A$ satisfies $i < j$, which implies that time strictly increases along the path:
    \[
        t(v_1) < t(v_2) < \cdots < t(v_k) < t(v_1).
    \]
    But this leads to a contradiction: we cannot have $t(v_k) < t(v_1)$ and also $t(v_1) < t(v_k)$.\\
    Hence, such a cycle cannot exist. Therefore, $G$ contains no directed cycles, and is thus acyclic.
\end{proof}

\section{Queuing aspect within $\mathcal{LTRSP}$}
\label{queue}
As stated above, the routing representation of $\mathcal{LTRSP}$ is based on time discretization to ensure trips synchronization under limited operating equipment, e.g., loaders. This time discretization aims to minimize congestion, truck waiting times and maximize the number of trucks served. In fact, the targeted $\mathcal{LTRSP}$ produces a plan of routes, and within each route, it includes an assignment of trucks to nodes at specific time intervals $i$. For example, the following timeline illustrates the truck 
being loaded at forest site $F$ at $i=5$:

\begin{center}
\begin{tikzpicture}[scale=1, every node/.style={scale=0.6}]
    \draw[->,thick] (-0.5,-1.2) -- (14,-1.2) node[anchor=north] {Time};
    
    \foreach \x in {0,1,2,3,4,5,6,7,8,9,10,11,12} {
        \draw (\x cm,-1) -- (\x cm,-1.2);
        \node at (\x cm, -0.6) {\small $i=\x$};
    }
    
    \fill[brown!50] (5 cm,-1.2) rectangle (6 cm,-0.9);
    
    \node[scale=3.5] at (5.5, -1.8) {\textcolor{brown}{\faTruck}};
    \node at (5.5, -2.4) {\small Truck at $i=5$};
\end{tikzpicture}
\end{center}

 We study the impact of the scheduling interval between consecutive trucks. We aim to identify a fixed interval $\Delta$ (minutes) to discretize time that balances queueing delay and idle time over a planning horizon $T>0$.

Let $L_j$ be the random variable of loading time of the $j$th truck, with mean $\mathbb{E}[L_j]=\overline{L}>0$ and variance $\mathrm{Var}(L_j)=\sigma^2<\infty$. 
Trucks are scheduled deterministically every $\Delta>0$ minutes, corresponding to a $D/G/1$ setting (deterministic interarrivals, general service times, one loader). 
Over a planning horizon $T$, the number of scheduled trucks is $N(\Delta)=\lfloor T/\Delta\rfloor$. 
Let $W_j(\Delta)$ denote the waiting time of the $j$th truck under FIFO service, and define the total expected waiting time
$W(\Delta)=\sum_{j=1}^{N(\Delta)} \mathbb{E}[W_j(\Delta)]$. The corresponding \emph{traffic intensity} is $\rho(\Delta)=\overline{L}/\Delta$.

\paragraph{Lindley recursion.}
Waiting times satisfy Lindley’s equation \cite{asmussen2003applied} : $
W_{j+1}(\Delta) \;=\; \max\{0,\, W_j(\Delta) + L_j - \Delta\}$, $W_1(\Delta)=0 $. To illustrate possible scenarios for a loader utilization, let
us suppose that $L_j\equiv\overline{L}$ the recursion becomes $W_{j+1}= \max\{0, W_j+\overline{L}-\Delta\}$.
If $\Delta<\overline{L}$, then $W_j$ increases linearly (trucks backlog).
If $\Delta>\overline{L}$, then $W_j\equiv0$ but every cycle leaves $\Delta-\overline{L}$ minutes idle and number of trucks that can be served over $T$ is decreased as $\lfloor T/\Delta\rfloor \leq  \lfloor T/\bar{x}\rfloor$.
At $\Delta=\overline{L}$, $W_j\equiv0$ and no idle time occurs. Figure \ref{fig:interval} illustrates the three cases.
\begin{figure}[H]
\centering
\begin{tikzpicture}[scale=1.2]

\draw[->] (0,0) -- (10,0) node[right] {\footnotesize Time};
\draw[->] (0,0) -- (0,3.5) node[above] {\footnotesize Loader Activity};

\draw[fill=gray!20] (0.5,3) rectangle (2,3.5);
\draw[fill=gray!20] (2.5,3) rectangle (4,3.5);
\draw[fill=gray!20] (4.5,3) rectangle (6,3.5);
\draw[dashed] (0.5,3) -- (0.5,0);
\draw[dashed] (2.5,3) -- (2.5,0);
\draw[dashed] (4.5,3) -- (4.5,0);
\node at (7.5,3.25) {\footnotesize $\Delta > \overline{L}$ (Idle Time)};

\draw[fill=blue!30] (0.5,2) rectangle (2,2.5);
\draw[fill=blue!30] (2,2) rectangle (3.5,2.5);
\draw[fill=blue!30] (3.5,2) rectangle (5,2.5);
\draw[dashed] (0.5,2) -- (0.5,0);
\draw[dashed] (2,2) -- (2,0);
\draw[dashed] (3.5,2) -- (3.5,0);
\node at (7.5,2.25) {\footnotesize $\Delta = \overline{L}$ (Optimal)};

\draw[fill=red!40] (0.5,1) rectangle (2.5,1.5);
\draw[fill=red!40] (1.5,1) rectangle (3.5,1.5);
\draw[fill=red!40] (2.5,1) rectangle (4.5,1.5);
\draw[dashed] (0.5,1) -- (0.5,0);
\draw[dashed] (1.5,1) -- (1.5,0);
\draw[dashed] (2.5,1) -- (2.5,0);
\node at (7.5,1.25) {\footnotesize $\Delta < \overline{L}$ (Backlog builds)};

\end{tikzpicture}
\caption{Loader utilization across scheduling regimes.}
\label{fig:interval}
\end{figure}

In practice, $\sigma^2 > 0$, as the forest industry is subject to considerable variability \cite{gil2016log}. 
Therefore, exact zero waiting time is impossible in steady state unless $\rho(\Delta) < 1$ (and even then, variability may still induce positive waiting). For deterministic interarrivals ($D/G/1$), a sharp and widely used approximation called Kingman’s formula \cite{kingman1961single} gives the expected waiting time per truck :
\begin{equation}
\label{eq:kingman}
\mathbb{E}[W_j(\Delta)] \;\approx\; \frac{\rho(\Delta)}{1-\rho(\Delta)} \cdot \frac{c_a^2+c_s^2}{2}\,\overline{L}
\;=\; \frac{\overline{L}}{\Delta-\overline{L}}\cdot \frac{\sigma^2}{2\,\overline{L}}
\;=\; \frac{\sigma^2}{2\,\big(\Delta-\overline{L}\big)} ,
\end{equation}
where $c_a^2=0$ (deterministic arrivals) and $c_s^2=\sigma^2/\overline{L}^2$.
In long run, the idle fraction is $1-\rho(\Delta)=1-\overline{L}/\Delta$, independent of $\sigma^2$.

Equation \eqref{eq:kingman} yields a clear trade-off: increasing $\Delta$ reduces waiting but increases idle time of loaders; decreasing $\Delta$ increases loaded trucks number but eventually increases waiting time per truck.

\vspace{0.5em}
\begin{proposition} \label{prop:frontier}To evaluate the loading process delay frontier with variability, let's fix weights $\alpha,\beta>0$ two importance weights, and consider the queuing system loss function
\[
\mathcal{C}(\Delta) \;\equiv\; 
\underbrace{\alpha\,\frac{T}{\Delta}\,\mathbb{E}[W_j(\Delta)]}_{\text{total expected waiting over }[0,T]}
\;+\;
\underbrace{\beta\,T\Big(1-\tfrac{\overline{L}}{\Delta}\Big)}_{\text{total idle time over }[0,T]}\!,
\qquad \Delta>\overline{L}.
\]
Under the Kingman approximation \eqref{eq:kingman}, the minimizer satisfies
\[
\Delta^\star \;=\; \overline{L} \;+\; s^\star,\qquad
s^\star \;=\; \frac{\frac{\alpha\sigma^2}{2} \;+\; \sqrt{\big(\tfrac{\alpha\sigma^2}{2}\big)^2 + \alpha\beta\,\overline{L}^{\,2}\sigma^2}}{\beta\,\overline{L}} \;>\; 0.
\]
\end{proposition}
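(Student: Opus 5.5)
The plan is to substitute the Kingman approximation \eqref{eq:kingman} into $\mathcal{C}$ and reduce the problem to minimizing a rational function of one variable. We work with the slack $s=\Delta-\overline{L}>0$, which is the natural variable here. Writing $a=\alpha\sigma^2/2$ and using $\mathbb{E}[W_j(\Delta)]\approx \sigma^2/(2s)$ and $1-\overline{L}/\Delta = s/(\overline{L}+s)$, the loss becomes
\[
\mathcal{C}=T\,f(s),\qquad f(s)=\frac{a}{s(\overline{L}+s)}+\frac{\beta s}{\overline{L}+s},\qquad s>0 .
\]
The factor $T>0$ does not affect the minimizer, so it suffices to minimize $f$ on $(0,\infty)$.

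\textbf{Step 1 (first-order condition).} Differentiating gives
\[
f'(s)=\frac{1}{(\overline{L}+s)^2}\left(\beta\overline{L}-\frac{a(\overline{L}+2s)}{s^2}\right).
\]
Hence $f'(s)$ has the sign of the quadratic
\[
q(s)=\beta\overline{L}\,s^2-2as-a\overline{L}.
\]

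\textbf{Step 2 (sign analysis).} We have $q(0)=-a\overline{L}<0$, and $q$ is convex with leading coefficient $\beta\overline{L}>0$. Therefore $q$ has exactly one positive root $s^\star$. Moreover $f$ is strictly decreasing on $(0,s^\star)$ and strictly increasing on $(s^\star,\infty)$. Since $f(s)\to\infty$ as $s\to0^+$, the point $s^\star$ is the unique global minimizer of $\mathcal{C}$ over $\Delta>\overline{L}$.

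\textbf{Step 3 (closed form).} Solving $q(s)=0$ and keeping the positive root gives
\[
s^\star=\frac{a+\sqrt{a^2+a\beta\overline{L}^{2}}}{\beta\overline{L}} .
\]
This root is positive because the square root strictly exceeds $a$. Then $\Delta^\star=\overline{L}+s^\star$.

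\textbf{Main obstacle: matching the constant under the square root.} With $a=\alpha\sigma^2/2$, the term $a\beta\overline{L}^2$ equals $\tfrac12\alpha\beta\overline{L}^{2}\sigma^2$, not $\alpha\beta\overline{L}^{2}\sigma^2$ as written in the proposition. The first-order condition should therefore be rechecked carefully; in particular, verify $\frac{d}{ds}\frac{s}{\overline{L}+s}=\frac{\overline{L}}{(\overline{L}+s)^2}$ and $\frac{d}{ds}\frac{1}{s(\overline{L}+s)}=-\frac{\overline{L}+2s}{s^2(\overline{L}+s)^2}$. If the computation holds, the stated formula needs this factor-of-two correction under the root. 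Alternatively, one can identify the convention, such as a different normalization of the Kingman term, under which the statement's constant is exact. The qualitative conclusions (existence, uniqueness, and $s^\star>0$) are unaffected either way.
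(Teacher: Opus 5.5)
This is essentially the paper's argument: it also substitutes Kingman, passes to the slack $s=\Delta-\overline{L}$, and reaches exactly your first-order condition $\beta\overline{L}s^2=\frac{\alpha\sigma^2}{2}(\overline{L}+2s)$. Your sign analysis of $q$ adds something the paper lacks, since it proves the uniqueness and global minimality that the paper only asserts.

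Your derivatives are right, so drop the hedge. The positive root has $\tfrac12\alpha\beta\overline{L}^{2}\sigma^2$ under the square root, and the paper's claim that it gives $s^\star$ ``as stated'' is an algebra slip. No renormalization of the Kingman term repairs it, because matching would need $a=\alpha\sigma^2/2$ and $a=\alpha\sigma^2$ at once. As a consequence, the leading-order buffer in Appendix A becomes $\sigma\sqrt{\alpha/(2\beta)}$ rather than $\sigma\sqrt{\alpha/\beta}$.
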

In particular, as the variability vanishes ($\sigma^2 \downarrow 0$), we obtain $s^\star \downarrow 0$ and thus $\Delta^\star \to \overline{L}$. 
This recovers the intuitive result that when service times exhibit negligible variability, the time discretization can be chosen equal to the mean loading (or unloading) duration.

\begin{proof}
Using $N(\Delta)\approx T/\Delta$ and \eqref{eq:kingman}, minimize
\(
\mathcal{C}(\Delta)/T
= \alpha\,\frac{1}{\Delta}\cdot\frac{\sigma^2}{2(\Delta-\overline{L})}
+ \beta\big(1-\tfrac{\overline{L}}{\Delta}\big)
\)
over $\Delta>\overline{L}$. Setting $s=\Delta-\overline{L}>0$, algebra yields the first-order condition
\(
\beta\,\overline{L}\,s^2 = \tfrac{\alpha\sigma^2}{2}\,(\overline{L}+2s),
\)
whose positive root gives $s^\star$ as stated.
\end{proof}
Proposition~\ref{prop:frontier} characterizes the discretization interval
\(\Delta^\star = \overline{L} + s^\star\) as the unique minimizer of a
queueing-based loss function that balances total expected waiting time and
total idle time through the weights \(\alpha\) and \(\beta\).
The resulting buffer \(s^\star\) therefore depends on the relative importance
assigned to congestion (waiting) versus resource underutilization (idle time).

In practice, however, these economic weights are often difficult to calibrate
explicitly.
Operationally, planners typically reason instead in terms of an acceptable
risk level: namely, the maximum probability \(\varepsilon\) that a truck
arrives while the loader is still busy.
This motivates a complementary formulation in which the discretization interval
is chosen to satisfy a probabilistic feasibility constraint rather than to
minimize an explicit cost function. The optimal buffer in Appendix A admits the expansion
\[
s^\star
=
\sigma \sqrt{\frac{\alpha}{\beta}}
+
\mathcal{O}(\sigma^2),
\]
which reveals that the leading-order behavior of the optimal discretization
interval is governed by the ratio \(\alpha/\beta\).
Consequently, specifying a tolerance level \(\varepsilon\) implicitly fixes the
magnitude of the safety buffer required to control the tail probability
\(\mathbb{P}(L_i > \overline{L} + s)\).
Independently of any distributional assumption, such a probabilistic feasibility
constraint necessarily imposes that the buffer scales as
\(s = \sigma\, g(\varepsilon)\), where \(g(\varepsilon)\) is a dimensionless
function that increases as \(\varepsilon\) decreases.

Since Proposition~\ref{prop:frontier} shows that the optimal buffer satisfies
\(s^\star = \sigma \sqrt{\alpha/\beta} + \mathcal{O}(\sigma^2)\),
the ratio \(\alpha/\beta\) plays exactly the role of this dimensionless safety
factor.
Thus, specifying a tolerance level \(\varepsilon\) can be interpreted as
implicitly selecting a corresponding value of \(\alpha/\beta\), which we write
in the generic form
\[
\frac{\alpha}{\beta} \;\approx\; g(\varepsilon)^2,
\]
anticipating that an explicit expression for \(g(\varepsilon)\) will be derived
in the corollary below.

\begin{corollary}
\label{cor:cantelli}
For any $\varepsilon \in ]0,1[ $ choosing
$\Delta \;=\; \overline{L} \;+\; \sigma \sqrt{\tfrac{1-\varepsilon}{\varepsilon}}$ guarantees $\mathbb{P}(L_i > \Delta) \leq \varepsilon$. Hence, with probability at least $1-\varepsilon$, each truck arrives when the loader is free.
\end{corollary}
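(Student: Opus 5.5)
The plan is to apply Cantelli's inequality, the one-sided Chebyshev bound, to the loading time $L_i$, using only its mean $\overline{L}$ and variance $\sigma^2<\infty$ from the setting of Section~\ref{queue}. No distributional assumption is needed. For any $s>0$, Cantelli gives
\[
\mathbb{P}\big(L_i-\overline{L}\ge s\big)\;\le\;\frac{\sigma^2}{\sigma^2+s^2}.
\]
If Cantelli is not to be quoted, I would prove it in one line. For any $u\ge 0$, the event $\{L_i-\overline{L}\ge s\}$ implies $\{(L_i-\overline{L}+u)^2\ge (s+u)^2\}$. Markov's inequality then bounds its probability by $(\sigma^2+u^2)/(s+u)^2$, and the choice $u=\sigma^2/s$ minimizes this to $\sigma^2/(\sigma^2+s^2)$.

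Next, set $s=\sigma\sqrt{(1-\varepsilon)/\varepsilon}$, which is well defined and positive for $\varepsilon\in\,]0,1[$ and $\sigma>0$. Then $s^2=\sigma^2(1-\varepsilon)/\varepsilon$, so
\[
\sigma^2+s^2=\frac{\sigma^2}{\varepsilon},
\qquad
\frac{\sigma^2}{\sigma^2+s^2}=\varepsilon .
\]
With $\Delta=\overline{L}+s$, we have $\mathbb{P}(L_i>\Delta)\le\mathbb{P}(L_i-\overline{L}\ge s)\le\varepsilon$. The degenerate case $\sigma=0$ is immediate: then $L_i=\overline{L}$ almost surely, so $\mathbb{P}(L_i>\overline{L})=0\le\varepsilon$.

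The last step is the interpretive claim that, with probability at least $1-\varepsilon$, each truck arrives when the loader is free. I would argue this per scheduled slot, and it is the main obstacle, because the claim needs care. Trucks are scheduled every $\Delta$ minutes. If the previous truck began service at its scheduled time, the loader is free at the next arrival exactly when $L_i\le\Delta$, which happens with probability at least $1-\varepsilon$. A full statement would need the previous truck to have started on time, that is $W_i=0$, since by the Lindley recursion a carried-over delay could accumulate. I would therefore state the guarantee conditionally on the loader being free at the previous arrival. This is the natural reading in the discretized time-space network, where each interval is reset and capacity is checked per interval. Read this way, the Cantelli bound delivers the corollary directly.
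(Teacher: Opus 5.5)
Your proposal is correct and follows the paper's own argument: Cantelli's inequality with $t=\Delta-\overline{L}$. The paper rearranges $\sigma^2/(\sigma^2+t^2)\le\varepsilon$ into a lower bound on $\Delta$, while you substitute the chosen value and get exactly $\varepsilon$. Your additions go beyond the paper: the Markov-based proof of Cantelli, the $\sigma=0$ case, and the conditional reading of the final sentence given $W_i=0$, which tacitly uses independence of $L_i$ from earlier service times. That caution is warranted, since the paper asserts the ``loader is free'' consequence without argument, yet by the Lindley recursion $\mathbb{P}(W_{i+1}>0)\ge\mathbb{P}(L_i>\Delta)$ unconditionally.
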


\begin{proof}
We want $\mathbb{P}(L_j > \Delta) \leq \varepsilon$.  
By Cantelli’s inequality \cite{cantelli1929sui},
\[
\mathbb{P}(L_j - \overline{L} \geq t) \;\leq\; \frac{\sigma^2}{\sigma^2+t^2}, \quad t>0.
\]
Setting $t = \Delta - \overline{L}$ gives
\[
\mathbb{P}(L_i > \Delta) \;\leq\; \frac{\sigma^2}{\sigma^2 + (\Delta-\overline{L})^2}.
\]
To enforce this bound $\leq \varepsilon$, it suffices that
\[
\frac{\sigma^2}{\sigma^2 + (\Delta-\overline{L})^2} \leq \varepsilon,
\]
which rearranges to
\[
\Delta \;\geq\; \overline{L} + \sigma \sqrt{\tfrac{1-\varepsilon}{\varepsilon}}.
\]
Thus the stated choice of $\Delta$ ensures $\mathbb{P}(L_j > \Delta) \leq \varepsilon$.
\end{proof}

\begin{remark}

In our practical case, we evaluate the average loading time as $\overline{L}=30$ minutes with standard deviation $\sigma=5$ minutes.  
If we set the tolerance to $\varepsilon=0.1$ ( at most $10\%$ of the trucks can wait), then $\Delta = 30 + 5 \sqrt{\tfrac{1-0.1}{0.1}}
= 30 + 5\times 3
\approx 45 \text{ minutes}$. Thus, spacing trucks every $\sim 45$ minutes ensures that at least $90\%$ of them find the loader available upon arrival. This value is used to determine the time interval discretization when building the routing network.
\end{remark}

\section{Problem formulation}
\label{prob-formulation}
To address the aspects discussed earlier, we formulated a MILP model to generate routes comprising sequences of loaded and unloaded trips, assign trucks to these routes, and schedule truck arrivals at each node. The model can generate up to an entire week of $T$ days, ensuring that the demands of the mills are met while adhering to multiple constraints, including resource limitations, flow conservation, time windows and other usual constraints known in forestry. An overview of the mathematical notation used throughout this paper is provided.
The model involves the following decision variables:
\begin{flalign*}
&x_{avt} = \text{1 if arc } a \text{ is traversed by vehicle } 
            v \text{ on day } t\text{, and 0 otherwise.}&\\
&\delta_{mp} = \text{unmet demand for product } p 
               \text{ at mill } m\text{.}&
\end{flalign*}
The objective function and constraints are subsequently described. 
The mathematical model capturing the most frequent planning aspects 
encountered by our industrial partner, denoted $\mathcal{P}_m$, 
comprises Equations~(\ref{objfun})--(\ref{const14}).

{\small
\begin{align}
\min \quad & \sum_{t \in T}\sum_{v \in V}\sum_{a \in A} (c_{a}^{h}+c_{a}^{w}) 
             x_{avt} + \sum_{m \in M} \sum_{p \in P} c_{mp} \delta_{mp} 
             \label{objfun} \\[5pt]
\text{s.t.} \quad 
& \sum_{a \in A^{+}(h)} x_{avt} \leq 1, 
  \quad \forall\, v \in V,\, h \in \mathcal{HB}_v,\, t \in T 
  \label{const1} \\[5pt]
& \sum_{a \in A^{-}(h)} x_{avt} = \sum_{a \in A^{+}(h)} x_{avt}, 
  \quad \forall\, v \in V,\, h \in \mathcal{HB}_v,\, t \in T 
  \label{const2} \\[5pt]
& \sum_{a \in A^+(n)} x_{avt} = \sum_{a \in A^-(n)} x_{avt}, 
  \quad \forall\, v \in V,\, n \in N,\, t \in T 
  \label{const3} \\[5pt]
& \sum_{t \in T}\sum_{f \in F}\sum_{v \in V}\sum_{a \in A_{f}^{mp}} 
  q_v x_{avt} + \delta_{mp} = d_{mp}, 
  \quad \forall\, m \in M,\, p \in P 
  \label{const4} \\[5pt]
& \sum_{t \in T}\sum_{m \in M}\sum_{v \in V}\sum_{a \in A_{f}^{mp}} 
  q_v x_{avt} \leq s_{fp}, 
  \quad \forall\, f \in F,\, p \in P 
  \label{const5} \\[5pt]
& \sum_{t \in T}\sum_{m \in M}\sum_{f \in F}\sum_{p \in P}
  \sum_{a \in A_{f}^{mp}} x_{avt} \leq K_v, 
  \quad \forall\, v \in V 
  \label{const6} \\[5pt]
& \sum_{t \in T}\sum_{v \in V}\sum_{a \in A_{fi}^L} 
  \beta_v x_{avt} \leq \Lambda_{fi}, 
  \quad \forall\, i \in I,\, f \in F,\, t \in T 
  \label{const7} \\[5pt]
& \sum_{t \in T}\sum_{v \in V}\sum_{a \in A_{mi}^U} 
  \beta_v x_{avt} \leq \Lambda_{mi}, 
  \quad \forall\, i \in I,\, m \in M 
  \label{const8} \\[5pt]
& \tau_a^{+} - \textit{BigM}(1 - x_{avt}) + S_t \leq T^c_{an}, 
  \quad \forall\, v \in V,\, a \in A,\, t \in T 
  \label{const9} \\[5pt]
& T^o_{an} \leq \tau_a^{+} + \textit{BigM}(1 - x_{avt}), 
  \quad \forall\, v \in V,\, a \in A,\, t \in T 
  \label{const10} \\[5pt]
& x_{avt} \in \{0, 1\}, 
  \quad \forall\, v \in V,\, a \in A,\, t \in T 
  \label{const11} \\[5pt]
& \delta_{mp} \in \{0, \ldots, d_{mp}\}, 
  \quad \forall\, m \in M,\, p \in P 
  \label{const14}
\end{align}}
 The objective function \eqref{objfun} minimizes the  total transportation and waiting costs across all trips.  Constraint \eqref{const1} limits each truck to a single  departure arc. Constraints \eqref{const2} and \eqref{const3}  impose flow conservation at every node, ensuring route  continuity throughout the network. Constraint \eqref{const4}  requires that mill demand is fully met, whereas constraint  \eqref{const5} prevents the consumption of products from  exceeding available supply at forest sites. Constraint  \eqref{const6} bounds the number of trips that each truck  may perform. Constraints \eqref{const7} and \eqref{const8}  enforce loader capacity limits at forest sites and mills,  respectively. Constraints~\eqref{const9} and~\eqref{const10}  guarantee that operations at each node remain within the  prescribed time windows. Finally, constraints~\eqref{const11}  and~\eqref{const14} specify the domain of the decision  variables. This model $\mathcal{P}_m$, is the principal formulation used in collaboration with our industrial partner, as it encompasses all its common operational constraints. However, in forestry operations—both in Canada and worldwide—several additional constraints are frequently encountered. Since our objective is to introduce a comprehensive formulation. In what follows, we present the most important and critical constraints identified in \cite{audy2023planning}, and we highlight how each of them can be integrated into our formulation through explicit mathematical constraints.

\subsection{Multiple home bases \& incomplete route }

This work addresses the aspect of multiple home bases for a vehicle. 
In practice, a vehicle can be associated with more than one home base, 
for example, a contractor depot and the driver’s personal residence, 
both of which may serve as valid end-of-day locations. 
Such flexibility must be incorporated into the planning of truck routes. 
To model this aspect, we introduce a new decision variable $y_{vht}$, 
which equals 1 if truck $v$ finishes its operations on day $t$ at home base $h$, 
and 0 otherwise. 
Consequently, constraints \eqref{const1}--\eqref{const2} in $\mathcal{P}_m$ 
are replaced by the following set of constraints:

\begin{align}
&\sum_{a \in A^{+}(h_v^0)} x_{av0} \le 1 \, \forall v \in V, \label{const15} \\[5pt]
&\sum_{h \in \mathcal{HB}_v}\sum_{a \in A^{-}(h)} x_{avt} =\sum_{h \in \mathcal{HB}_v}\sum_{a \in A^{+}(h)} x_{avt} \, \forall v \in V,\ \forall t \in T, \label{const16} \\[5pt]
&y_{vht} =\sum_{a \in A^-(h)} x_{avt} \,\forall v \in V,\ \forall h \in \mathcal{HB}_v,\ \forall t \in T, \label{const17} \\[5pt]
&\sum_{h \in \mathcal{HB}_v} y_{vht}=1 \, \forall v \in V,\ \forall t \in T, \label{constt18} \\[5pt]
&y_{vht} \le y_{vh,t+1} + \sum_{a \in A^+(h)} x_{av,t+1} \,\forall v \in V,\ \forall h \in \mathcal{HB}_v,\ \forall t \in T \setminus \{t_f\}. \label{constt19}
\end{align}

Constraint~\eqref{const15} ensures that each truck departs at most once from its initial home base 
at the beginning of the planning horizon. 
Constraint~\eqref{const16} guarantees that, for each day, the number of arrivals and departures 
at the set of home bases assigned to a vehicle remain balanced. 
Constraints~\eqref{const17}--\eqref{constt19} explicitly track the daily location of each vehicle 
at a home base: \eqref{const17} defines whether a truck ends its day at a specific home base, 
\eqref{constt18} ensures that every truck is parked in exactly one home base at the end of each day, 
and \eqref{constt19} enforces continuity, requiring that a truck either remains in the same home base 
when inactive or departs from it if it resumes operations the next day.

Incomplete routes correspond to the case where trucks can remain loaded overnight 
and spend the night either at the company’s home base or at the driver’s personal residence. 
This aspect is first handled during the generation of the routing network by creating 
$A_{\mathrm{1}} = \{ ((n_1,i_1),(n_2,i_2)) \mid n_1 \in F, \ n_2 \in HB \}$, 
which represents arcs from forest blocks to return locations, 
and 
$A_{\mathrm{2}} = \{ ((n_1,i_1),(n_2,i_2)) \mid n_1 \in HB, \ n_2 \in M \}$, 
which represents arcs from these return locations to mills to complete the route on the next day. 
The subset \( A_{\mathrm{1}} \cup A_{\mathrm{2}} \) can be preprocessed using the operator 
\textit{Preprocess}\(\,(A_{\mathrm{1}} \cup A_{\mathrm{2}})\) in order to generate only meaningful arcs. 

To take into account this aspect, we add constraints \eqref{constHB1L} and \eqref{constHB3b}. 
Explicitly, we add a new variable $y^{L}_{vht}$ equal to 1 if vehicle $v$ ends its day $t$ at home base $h$ loaded, 0 otherwise:
\begin{align}
&y^{L}_{vht} 
= \sum_{a \in A_1(h)} x_{avt} \, \forall v \in V,\ \forall h \in \mathcal{HB}_v,\ \forall t \in T, \label{constHB1L} \\[5pt]
&y^{L}_{vht} \le \sum_{a \in A_2(h)} x_{av,t+1} \,\forall v \in V,\ \forall h \in \mathcal{HB}_v,\ \forall t \in T \setminus \{t_f\}. \label{constHB3b}
\end{align}

Constraint~\eqref{constHB1L} identifies whether a vehicle finishes its day at a home base while still loaded, 
based on arcs in $A_1$. 
Constraint~\eqref{constHB3b} enforces continuity of incomplete routes: 
if a vehicle ends a day loaded at a home base, it must start the following day with an arc in $A_2$, 
i.e., by traveling to a mill to unload.

\subsection{Partial Load (LTL)}

In timber transportation, it is common sometimes to find that the amount of a product available at a supply site is smaller than the truck capacity. This occurs, for example, when a harvesting operation is nearing completion and only a small residual pile remains, or when multiple sites each have partial quantities of the same product that must be cleared. In such cases, \emph{Less Than Truckload} (LTL) consolidation is necessary: several partial loads can be combined in a single trip, provided the truck capacity is not exceeded. The industrial partner often requires that these residual piles be removed promptly to free storage areas or to complete harvesting in a given block. LTL consolidation in this context is challenging because it not only impacts route optimization but also interacts with truck capacity planning and harvesting site inventory management. Poor planning of LTL pickups may lead to inefficient trips, with many partial pickups increasing travel time and operational costs. In practice, however, the number of LTL items remains limited: at most five residual LTLs are typically observed per week, and within a given forest block there is usually no more than one LTL item per product. 

We extend the principal model $\mathcal{P}_m$ to allow the inclusion of residual LTL items. Let $\mathcal{R} \subseteq F \times P$ denote the set of residual LTL items, where each element $(f,p) \in \mathcal{R}$ represents a partial quantity $r_{fp} > 0$ (in GMT) of product $p$ available at forest $f$. The principal binary routing variable $x_{avt}$ is augmented with two new variables: $x^{F}_{avt}$, equal to $1$ if arc $a$ of truck $v$ on day $t$ carries a full truckload of size $q_v$, and $x^{L}_{avtfp}$, equal to $1$ if arc $a$ of truck $v$ on day $t$ carries the LTL item $(f,p)$ of volume $r_{fp}$.

The following additional constraints are introduced. First, the load type is partitioned so that each loaded arc is either a full truckload or the corresponding LTL item:
\begin{align}
x^{F}_{avt} \;+\; x^{L}_{avtfp}
&= x_{avt} && \forall v\in V,\ \forall t\in T,\ \forall (f,p)\in\mathcal{R},\ \forall a\in A^{F}_{fp}, \label{const18} \\
r_{fp}\,x^{L}_{avtfp}
&\le q_v\,x_{avt} && \forall v\in V,\ \forall t\in T,\ \forall (f,p)\in\mathcal{R},\ \forall a\in A^{m}_{fp}. \label{const19}
\end{align}

Second, the demand satisfaction and forest supply constraints of the principal model are updated so that the quantity contributions are expressed in GMT, with the FTL part contributing $q_v$ per full load arc and the LTL part contributing the exact residual volumes:
\begin{equation}
\sum_{f\in F}\sum_{v\in V}\sum_{t\in T}\sum_{a\in A^{mp}_{f}} q_v\,x^{F}_{avt}
\;+\;
\sum_{v\in V}\sum_{t\in T}\sum_{a\in A^{mp}_{f}} r_{fp}\,x^{L}_{avtfp}
\;+\;\delta_{mp}
= d_{mp},
\qquad \forall m\in M,\ \forall p\in P,
\label{const20}
\end{equation}
\begin{equation}
\sum_{m\in M}\sum_{v\in V}\sum_{t\in T}\sum_{a\in A^{mp}_{f}} q_v\,x^{m}_{avt}
\;+\;
\sum_{v\in V}\sum_{t\in T}\sum_{a\in A^{m}_{fp}} r_{fp}\,x^{L}_{avtfp}
\;\le\; s_{fp},
\qquad \forall f\in F,\ \forall p\in P.
\label{const21}
\end{equation}

Finally, each residual LTL item must be transported exactly once, with its destination determined by the selected loaded arc:
\begin{equation}
\sum_{v\in V}\sum_{t\in T}\sum_{a\in A^{F}_{fp}} x^{L}_{avtfp} \;=\; 1,
\qquad \forall \, (f,p)\in\mathcal{R}. 
\label{const22}
\end{equation}

Constraints \ref{const18}, \ref{const19} enforce capacity feasibility for LTL arcs, while \ref{const20}, \ref{const21} ensure correct integration of LTL volumes into the flow balance. All other routing, timing, and operational constraints from the principal model remain unchanged.

Adding these constraints requires augmenting the initial routing network with new arcs allowing trucks to travel from a forest block to another block to consolidate possible partial loads:
$A_{\mathrm{3}} = \{ ((n_1,i_1),(n_2,i_2)) \mid n_1 \in \mathcal{F}_{\mathcal{R}},\ n_2 \in \mathcal{F}_{\mathcal{R}} \}$
where
$\mathcal{F}_{\mathcal{R}} = \{ f \in F \mid \exists \, p \in P \ \text{s.t.} \ (f,p) \in \mathcal{R} \}$
is the set of forest sites containing residual LTL items.

\subsection{Priority Rules}

In forest transportation operations, \emph{priority rules} represent contractual, operational, or inventory-based requirements that must be respected when assigning loads, routing trucks, or scheduling trips. A first category is \emph{destination-specific supply rules}, where certain forest products from a given forest site must be delivered only to a specific subset of mills due to contractual agreements or inventory policies. For a product $p$ from forest site $f$ allowed only to mills $\mathcal{M}_{fp} \subseteq M$, this can be enforced by
\begin{equation}
    x_{avt} = 0, 
    \quad \forall \, v \in V, \,\, a, \,\, t\, \in T \in A^F_{fp} \ \text{s.t.} \ \mathrm{end}(a) \notin \mathcal{M}_{fp}
\end{equation}
A second category is the \emph{minimum delivery requirement}, where certain mills must receive a minimum quantity of specific products during a given day. If $L_{mpt'}$ denotes the minimum required quantity (in GMT) of product $p$ at mill $m$ upon day $t'$, this can be formulated as
\begin{equation}
    \sum_{t=0}^{t'}\sum_{f \in F} \sum_{v \in V} \sum_{\substack{a \in A^{mp}_{fp} \\ }} q_v \, x^F_{avt}
    + \sum_{t=0}^{t'}\sum_{v \in V} \sum_{\substack{a \in A^{mp}_{f} \\ }} r_{fp} \, x^L_{avtfp} 
    \ge L_{mpt'}
\end{equation}
These constraints either filter infeasible arcs and enforce delivery quotas, without altering the main routing and timing structure of the model. 
\subsection{Driver Change Requirement}
In long-haul transportation, regulations on driving hours require a driver to rest after reaching a maximum allowable continuous driving time, denoted by $H^{\max}$. To model this, our framework allows for a \emph{driver change}, where a truck stops at a designated location and a relief driver takes over, effectively resetting the driving clock. Let $N^{DC}_v$ denote the set of nodes where a driver change is permitted for truck $v$.

To enforce this time limit, we introduce a continuous variable $u_{v,(n,i)} \ge 0$ to track the cumulative driving time of the driver for truck $v$ upon arrival at node-time pair $(n,i)$. A binary variable $y^{DC}_{vi} \in \{0,1\}$ is used to indicate whether a driver change occurs for truck $v$ during time slot $i$. For any arc $a$ in the network traversed by truck $v$, let $\mathrm{Start}(a)=(n_1,i_1)$ and $\mathrm{End}(a)=(n_2,i_2)$, with an associated pure driving time of $T^{a}_{v}$. The following constraints govern the propagation and reset of the cumulative driving time.

\begin{align}
& u_{v,(n_2,i_2)} \ge u_{v,(n_1,i_1)} + T^{a}_{v} - H^{\max}\,y^{DC}_{vi_2} - \text{BigM}\,(1-x_{avt}) 
&& \forall v \in V, a \in A, t \in T \label{const:dc_lower} \\[5pt]
& u_{v,(n_2,i_2)} \le u_{v,(n_1,i_1)} + T^{a}_{v} + \text{BigM}\,y^{DC}_{vi_2} + \text{BigM}\,(1-x_{avt}) 
&& \forall v \in V, a \in A, t \in T \label{const:dc_upper} \\[5pt]
& 0 \le u_{v,(n,i)} \le H^{\max}(1-y^{DC}_{vi}) + \varepsilon\,y^{DC}_{vi} 
&& \forall v \in V,\ (n,i) \in N \times T \label{const:dc_bounds} \\[5pt]
& y^{DC}_{vi} \le \sum_{n' \in N^{DC}_v} \sum_{a \in A_{n'i}} x_{avt} 
&& \forall v \in V,\ i \in T \label{const:dc_location} \\[5pt]
& u_{v,\mathrm{Start}(v)} = 0 && \forall v \in V \label{const:dc_initial}
\end{align}

Constraints \eqref{const:dc_lower} and \eqref{const:dc_upper} manage the update of the cumulative driving time along a traversed arc $a$. These constraints are activated by the term $\text{BigM}\,(1-x_{avt})$ only if truck $v$ actually uses the arc (i.e., if $x_{avt}=1$). Constraint \eqref{const:dc_bounds} enforces the driving time limits. If no driver change occurs ($y^{DC}_{vi}=0$), it ensures the cumulative time does not exceed the maximum limit $H^{\max}$. Conversely, if a driver change is performed ($y^{DC}_{vi}=1$), this constraint forces $u_{v,(n,i)}$ to be reset to a very small value $\varepsilon$. Constraint \eqref{const:dc_location} provides the logical link ensuring that a driver change can only be initiated ($y^{DC}_{vi}=1$) if truck $v$ is physically present at an authorized driver change node ($n' \in N^{DC}_v$) during time slot $i$. Finally, constraint \eqref{const:dc_initial} initializes the cumulative driving time to zero for all trucks at the start of their routes.

\section{Solution methodology}
In this section, we present the solution approaches used to solve the main mathematical model of the $\mathcal{LTRSP}$. 
To address the problem efficiently, we combine state-of-the-art MIP solvers with the well-established 
\emph{Relax \& Fix – Fix \& Optimize (R\&F–F\&O)} metaheuristic framework, 
which has recently gained increasing attention and enables the computation of high-quality solutions within reduced computational time \cite{wang2023relax}.
Before introducing the complete solution methodology, we summarize below the key empirical and theoretical observations that motivate and guide the choice of solving strategies:

\begin{itemize}
    \item \emph{Small integrality gap:} Preliminary solution tests of $\mathcal{P}_m$ reveal that LP relaxation is consistently strong as highlighted in Appendix B: only a limited subset of routing variables remains fractional at optimality.
    This confirms that the formulation exhibits a tight polyhedral structure and provides branch-and-bound with high-quality lower bounds.

    \item \emph{Early-incumbent latency:}  
 Given the size of the routing network, the main computational challenge is not the quality of the bound, but the time required to construct the first feasible schedule that satisfies all operational constraints in the planning horizon \(T\). Once such a schedule is obtained, the optimality gap closes in a reasonnable runtime.
 \end{itemize}
 
Given the two preceding points, one can observe that the \(\mathcal{LTRSP}\) formulation has a strong LP relaxation, while the main difficulty lies in generating the first feasible incumbent due to the problem size. \textit{R\&F - F\&O} framework is therefore well suited for this formulation: by decomposing the model into sequential integer blocks, it mitigates the early-incumbent latency and allows the solver to obtain a first feasible solution more quickly on partially relaxed subproblems.
This approach is further justified by the small integrality gap, since relaxation do not move the solution far from integrality.
As a result, solution quality remains stable and the search stays close to the true integer optimum.

In what follows, we describe in detail the main components of the solving approaches, namely the state-of-the-art MILP solver and the \textit{R\&F–F\&O} framework.

\subsection{MILP Solver Considerations}
The performance of branch-and-bound for large-scale vehicle routing and scheduling problems critically depends on the strength of the LP relaxation and the solver’s ability to navigate the search tree efficiently. Let $\mathrm{OPT}_{\text{LP}}$ denote the value of the LP relaxation and $\mathrm{OPT}_{\text{IP}}$ the value of the integer optimum. The integrality gap
\[
GAP_{int}= \frac{\mathrm{OPT}_{\text{IP}} - \mathrm{OPT}_{\text{LP}}}{\mathrm{OPT}_{\text{IP}}}
\]
governs the branching effort. A small $GAP_{int}$ implies that only a limited number of nodes need to be explored before proving optimality. Once an incumbent solution of quality $\mathrm{INC}$ is identified. Obtaining the initial incumbent is particularly challenging, as the solver relies on primal heuristics and node relaxations to construct high-quality feasible solutions at early stages. To improve this phase, we configure the LP algorithm inside the branch-and-bound tree by enforcing the barrier method through \texttt{Method = 2}. 
While the dual simplex method ($m=1$) efficiently reuses previously computed bases, the barrier method is better suited for large and highly degenerate relaxations.

Further, two parameter settings are employed to reduce branching inefficiencies. By setting \texttt{DegenMoves = 0}, we eliminate degeneracy-based pivot moves that add computational overhead without improving bounds in highly degenerate relaxations. Moreover, disabling crossover (\texttt{Crossover = 0}) avoids reconstructing simplex bases after barrier solves, thereby reducing unnecessary post-processing time within each node.

Finally, to mitigate the tailing-off effect inherent in MIP solvers, we impose a stopping criterion on the optimality gap. Let $\Delta(k)$ denote the relative gap at iteration $k$, defined as
\[
\Delta(k) = \frac{\mathrm{INC}(k) - \mathrm{LB}(k)}{\mathrm{INC}(k)},
\]
where $\mathrm{INC}(k)$ and $\mathrm{LB}(k)$ denote the incumbent and the best lower bound, respectively. It is well known that once $\Delta(k)$ falls below $5\%$, further progress is dominated by improvements in $\mathrm{LB}(k)$ rather than $\mathrm{INC}(k)$. Thus, enforcing \texttt{MIPGap = 0.01} guarantees termination once $\Delta(k) \leq 1\%$, ensuring solutions are practically optimal while avoiding excessive computational time in late-stage search.

In summary, the combination of a barrier-based LP solution strategy, suppression of degeneracy moves, disabling crossover, and an early-gap termination rule provides an effective configuration for scaling MILP solvers to horizon-based $\mathcal{LTRSP}$ instances. These adjustments exploit both the structural strength of the formulation and solver-specific mechanisms to alleviate branching effort.

\subsection{Relax-and-Fix (\textit{R\& F}) Strategy}

\textit{R\&F} is an iterative decomposition technique designed to progressively construct a feasible and near-optimal solution by solving a sequence of restricted MILP subproblems \cite{brahimi2015integrating}. It is particularly effective for time-dependent \textit{MILPs}, where decisions evolve over a finite planning horizon.
In our formulation, we divide the time horizon \( T \) into nonoverlapping blocks \( B \), where each block consists of a subset of consecutive time periods. The key idea is to iteratively fix decision variables for past periods, solve a subset of integer variables in the current time block, and relax future decisions. At each iteration \( k \), the problem is reformulated by partitioning the decision variables into three sets. The variables corresponding to previous time blocks \( \{ B_1, B_2, \dots, B_{k-1} \} \) are fixed to their integer values determined previously, ensuring consistency with previous decisions. The variables within the current time block \( B_k \) remain integer and are solved exactly, allowing the model to enforce discrete decisions over this subset. Finally, the variables associated with future time blocks \( \{ B_{k+1}, \dots, B_T \} \) are relaxed to their continuous counterparts, reducing the combinatorial complexity of the problem while preserving flexibility for subsequent iterations. Mathematically, this can be expressed as:
\[
\begin{cases}
    x_i \in \{0,1\} \quad \forall \,\,i \in B_k
     \\x_i = x_i^* \in \{0,1\} \quad \forall \,\, i \in \bigcup_{j=1}^{k-1} B_j 
     \\ x_i \in [0,1] \quad \forall i \in \bigcup_{j=k+1}^{T} B_j
\end{cases}
\]

This structured approach ensures that the integer assignments are constructed progressively while maintaining computational tractability. This ensures that the size of the problem remains manageable, significantly reducing computational effort compared to solving the whole \textit{MILP} at once. The entire approach is formally described in 
Algorithm~\ref{alg:relax_fix}.

\begin{algorithm}[H]
\caption{Relax-and-Fix}
\begin{algorithmic}[1]
\REQUIRE MILP model with planning horizon $T$, block size $B$
\ENSURE Near-optimal solution to the MILP
\STATE Load the MILP model and configure the objective function 
       to minimize total cost
\STATE Declare decision variables and incorporate all relevant 
       constraints into the model
\STATE Initialize storage structures for solutions and 
       the list of variables to be fixed
\FOR{each block $b$ from $0$ to $T$ with step $B$}
    \STATE \textbf{Relax-and-Fix strategy:}
    \begin{itemize}
        \item Fix variables associated with previously solved blocks
        \item Treat variables of the current block as integer
        \item Relax variables of upcoming blocks to continuous
    \end{itemize}
    \STATE Solve the resulting sub-problem using an MILP solver
    \IF{a feasible solution is found}
        \STATE Record the solution for the current block
    \ELSE
        \STATE \textbf{Stop:} No feasible solution exists for 
               the current block
    \ENDIF
\ENDFOR
\STATE \textbf{Output:} Final sol
ution status and objective value
\end{algorithmic}
\label{alg:relax_fix}
\end{algorithm}

\subsection{Fix-and-Optimize (F\&O) Strategy}

Once an initial solution is obtained using \textit{Relax-and-Fix}, we apply a \textit{Fix-and-Optimize} strategy to iteratively improve the integer solution or removing possible infeasibilities by focusing on small, overlapping subsets of decision variables. Unlike \textit{Relax-and-Fix}, which progressively assigns integer values to variables over time, \textit{Fix-and-Optimize} iteratively refines the solution by selecting subsets of decision variables for re-optimization while keeping others fixed as described by Algorithm \ref{Fix and optimize}. At each iteration \( k \), the optimization is restricted to a rolling time window \( I_k \), where the integer variables within the window are re-optimized, while all other variables remain fixed at their previously assigned values. This can be mathematically formulated as:
\[
x_i = x_i^*, \quad \forall\, i \in \bigcup_{j=1,\, j \neq k}^{T} I_j
\]
where \( x_i^* \) represents the previously assigned integer value of \( x_i \). This method ensures that only a small subset of variables is modified at each iteration, maintaining feasibility while improving the objective function in a controlled manner. By iteratively selecting different time windows \( I_k \), the \textit{Fix-and-Optimize} approach enhances solution quality without significantly increasing computational complexity.

This approach ensures a controlled exploration of the solution space, allowing for localized improvements without significantly altering feasibility. By iteratively refining subsets of decision variables, \textit{Fix-and-Optimize} enhances solution quality while maintaining computational efficiency.
\begin{algorithm}[H]
\caption{Fix-and-Optimize }
\label{alg:fix_optimize}
\begin{algorithmic}[1]
\REQUIRE MILP model with time horizon \( T \), set of time Intervals \( I_k \)
\ENSURE Improved solution to the MILP

\STATE Initialize the MILP model and set the objective to minimize total cost
\STATE Define decision variables and add relevant constraints to the model
\STATE Initialize solution storage and best-known objective value

\STATE Set \( k = 1 \)

\FOR{each block $I_k$}
    \STATE \textbf{Fix-and-Optimize Strategy:}
    \begin{itemize}
        \item Fix variables outside the selected interval \( I_k \)
        \item Optimize integer variables within \( I_k \)
    \end{itemize}
    \STATE Solve the restricted MILP using the solver
    \IF{solution improves the objective value}
        \STATE Update the best-known solution
    \ENDIF
    \STATE Move to the next time window: \( k \gets k+1 \)
\ENDFOR

\STATE \textbf{Output:} Final solution status, objective value, and selected variable values

\end{algorithmic}
\label{Fix and optimize}
\end{algorithm}
\subsection{Acceleration strategies}
\label{sec:acceleration_strategies}
The proposed formulation $\mathcal{P}_m$ involves a heterogeneous fleet,
time-dependent routing decisions, and detailed service-time modeling, which
collectively lead to a large and highly symmetric search space. Although the
core formulation is theoretically sound, practical solvability strongly depends
on additional modeling choices that influence relaxation strength and solver
behavior.
To improve computational efficiency without altering the feasible region or
excluding optimal solutions, we incorporate two acceleration strategies directly
into the formulation: (i) symmetry-breaking constraints for identical vehicles 
of the same contractor, and (ii) indicator constraints to replace big-$M$ 
time-window formulations.

\paragraph{Symmetry breaking.}
For a given contractor $c \in C$, let $V_c$ denote the set of vehicles operated
by $c$, and let $V_{c}^{\kappa} \subseteq V_c$ be the subset of vehicles sharing
the same configuration $\kappa$. Vehicles in $V_{c}^{\kappa}$ are
interchangeable, which induces a high degree of symmetry in the formulation.
This symmetry allows the solver to generate multiple equivalent solutions that
differ only by a permutation of identical vehicles, unnecessarily inflating the
branch-and-bound tree. To mitigate this effect, we introduce symmetry-breaking
constraints that impose a partial ordering on vehicle utilization within each
group $V_{c}^{\kappa}$.
Let $V_{c}^{\kappa} = \{v_1, v_2, \ldots, v_{k_c}\}$ be indexed arbitrarily.
We enforce the following ordering constraints:
\begin{equation}
\sum_{a \in A} \sum_{t \in T} x_{av_i t}
\;\ge\;
\sum_{a \in A} \sum_{t \in T} x_{av_{i+1} t},
\quad \forall c \in C,\ \forall \kappa,\ i = 1,\ldots,k_c-1,
\label{eq:symmetry_breaking}
\end{equation}
where $x_{avt}$ is the binary variable indicating whether arc $a$ is traversed
by vehicle $v$ at time period $t$.
Constraints \eqref{eq:symmetry_breaking} ensure that lower-indexed vehicles are
used at least as much as higher-indexed ones, thereby eliminating equivalent
solutions obtained through vehicle permutations.

\paragraph{Indicator constraints.}
Time-window feasibility at nodes is enforced only when the corresponding routing
arc is selected. Constraints \ref{const9}, \ref{const10} weaken the
linear relaxation and may cause numerical instability when $M$ is large. To
address this issue, we replace them with indicator constraints that directly
encode the underlying logical implication:
\begin{align}
x_{avt} = 1 &\;\Rightarrow\; \tau^+_{a} + S_a \le T^C_n, \label{eq:ind_close}\\
x_{avt} = 1 &\;\Rightarrow\; \tau^+_{a} + S_a \ge T^O_n, \label{eq:ind_open}
\end{align}
for all $a \in A$, $v \in V$, and $t \in T$.
Indicator constraints activate the time-window conditions only when arc $a$ is
selected by vehicle $v$ at time $t$, and remain inactive otherwise. This
reformulation preserves the exact role of the original constraints while
avoiding the use of constants. Conventional solvers such as \textit{Cplex} 
and \textit{Gurobi} support these kind of constraints.
\section{Computational study}
\label{exper}
In this section, we report the computational experiments conducted on
real-world data provided by a Canadian forest company. Recall that, in the Canadian forestry context, routing and scheduling
planning are typically made on a weekly basis.
Accordingly, the demand and supply information of our industrial partner
are specified at a weekly level.
The proposed implementation is general and parametrized by the planning
horizon $T$ enabling the generation of routing and scheduling plans for
arbitrary horizons.

To illustrate the outcomes of our study, we propose the following test plan. First, we provide details about the instances used, and then we present weekly routing and scheduling results (i.e., $T = 5$). As will be explained later, for weekly planning we compare different solving methods in terms of computational runtime and solution quality. Finally, we conduct a post-computational analysis to discuss the obtained results and derive business insights.
\subsection{Context and instances description}
We obtained two years of historical operational data from our industrial partner. The full dataset spans a network of 2 379 distinct locations, consisting of 50 mills, 2 270 forest blocks, and 59 contractor facilities. From these data, we generated weekly instances containing detailed information on harvested forest sites, mill demands, contractor fleets, loaders availability, and service times for loading and unloading. In the following subsections, we provide a detailed characterization of these instances. Table \ref{instances-characteristics} reports the set of weekly routing instances and their main characteristics.

\begin{figure}[htbp]
    \centering
    \includegraphics[width=0.9\linewidth]{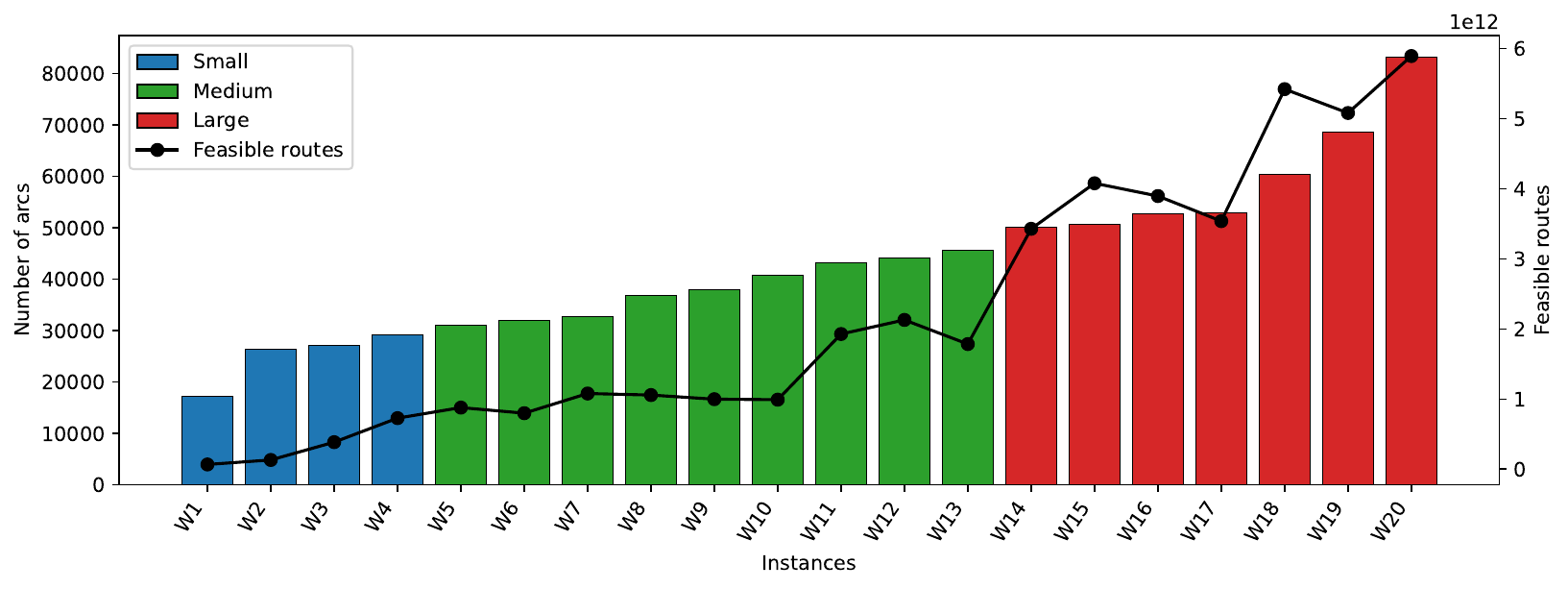}
    \caption{Growth of feasible routes as a function of the number of arcs.
    The number of arcs serves as a proxy for network density, which directly affects computational complexity.}
    \label{fig:feasible_routes}
\end{figure}

Based on the number of arcs, the instances $\mathcal{W}$ are categorized into three groups: small $\mathcal{W}_s$, medium $\mathcal{W}_m$, and large $\mathcal{W}_l$. 
The number of arcs is used as a proxy for network density, i.e., the number of potential routes connecting supply and demand nodes. 
Density is the main factor influencing computational effort, as it increases symmetry and degeneracy, thereby directly affecting the branching effort. 
In addition, our problem structure is strongly impacted by other factors such as a heterogeneous truck fleet (with different capacities and loading equipment), multiple product types, and diverse client demands. 
To illustrate, Figure~\ref{fig:feasible_routes} shows that the routing network size grows exponentially with the number of arcs: 
even a small increase in arcs leads to an exponential increase in the number of feasible routes.

\begin{table}[H]
\centering
\caption{Characteristics of the 20 weekly instances categorized by number of arcs. 
\textbf{Instance}: label of the weekly instance; 
\textbf{Dates}: planning horizon of the instance; 
\textbf{Served Mills}: number of mills served; 
\textbf{Used Blocks}: number of forest blocks used; 
\textbf{Products}: number of product types; 
\textbf{Vehicles}: number of available vehicles; 
\textbf{Home bases}: number of truck home bases; 
\textbf{Nodes}: number of nodes in the network; 
\textbf{Arcs}: number of arcs in the network; 
\textbf{Total Demand (GMT)}: total demand expressed in GMT units; 
\textbf{Avg Distance}: average distance traveled per arc (km); 
\textbf{Max Distance}: maximum distance between any two points (km).}
\resizebox{\textwidth}{!}{%
\begin{tabular}{cccccccccccc}
\hline
\textbf{Instance} & \textbf{Dates} & \textbf{Served Mills} & \textbf{Used Blocks} & \textbf{Products} & \textbf{Vehicles} & \textbf{Home bases} & \textbf{Nodes} & \textbf{Arcs} & \textbf{Total Demand (GMT)} & \textbf{Avg Distance} & \textbf{Max Distance} \\ \hline

\multicolumn{12}{c}{\textbf{Small Instances (Arcs $<$ 30,000)}} \\ \hline
$W_1$  & 2025-01-10 -- 2025-01-14 & 11 & 25 & 3 & 70 & 27 & 1295 & 26372 & 6140  & 242.06 & 574.00 \\ \hline
$W_2$  & 2024-12-30 -- 2025-01-04 & 13 & 24 & 3 & 60 & 19 & 1303 & 26504 & 8820  & 272.00 & 629.27 \\ \hline
$W_3$ & 2024-12-24 -- 2024-12-29 & 11 & 19 & 3 & 36 & 19 & 1063 & 17189 & 3820  & 259.31 & 651.28 \\ \hline
$W_4$  & 2024-11-29 -- 2024-12-03 & 13 & 23 & 3 & 51 & 25 & 1267 & 26934 & 7180  & 245.39 & 680.30 \\ \hline
$W_{5}$ & 2024-11-24 -- 2024-11-28 & 15 & 23 & 3 & 53 & 17 & 1321 & 29265 & 10440 & 206.22 & 628.94 \\ \hline

\multicolumn{12}{c}{\textbf{Medium Instances (30,000 $\leq$ Arcs $\leq$ 50,000)}} \\ \hline
$W_6$ & 2024-12-04 -- 2024-12-08 & 14 & 24 & 3 & 57 & 25 & 1333 & 32603 & 9600  & 231.08 & 660.30 \\ \hline
$W_7$  & 2024-12-09 -- 2024-12-13 & 17 & 30 & 3 & 66 & 27 & 1629 & 45237 & 14250 & 227.31 & 574.37 \\ \hline
$W_8$ & 2024-11-19 -- 2024-11-23 & 14 & 22 & 3 & 67 & 27 & 1267 & 32044 & 10800 & 283.70 & 657.06 \\ \hline
$W_{9}$ & 2024-11-14 -- 2024-11-18 & 17 & 23 & 3 & 68 & 30 & 1376 & 37120 & 10320 & 229.56 & 660.30 \\ \hline
$W_{10}$ & 2024-11-09 -- 2024-11-13 & 16 & 21 & 3 & 62 & 26 & 1280 & 31172 & 9630  & 225.54 & 657.06 \\ \hline
$W_{11}$ & 2024-10-30 -- 2024-11-03 & 18 & 46 & 4 & 59 & 27 & 1465 & 44237 & 12930 & 241.16 & 728.76 \\ \hline
$W_{12}$ & 2024-10-14 -- 2024-10-18 & 16 & 26 & 4 & 56 & 26 & 1464 & 36989 & 8140  & 264.96 & 739.14 \\ \hline
$W_{13}$ & 2024-10-10 -- 2024-10-14 & 13 & 37 & 4 & 62 & 27 & 1777 & 40795 & 11330 & 252.04 & 739.14 \\ \hline
$W_{14}$  & 2025-01-05 -- 2025-01-09 & 16 & 31 & 3 & 88 & 27 & 1637 & 43155 & 20130 & 244.92 & 574.00 \\ \hline
\multicolumn{12}{c}{\textbf{Large Instances (Arcs $>$ 50,000)}} \\ \hline
$W_{15}$ & 2024-10-25 -- 2024-10-29 & 17 & 37 & 4 & 72 & 30 & 1888 & 53103 & 12850 & 246.67 & 657.06 \\ \hline
$W_{16}$ & 2024-12-19 -- 2024-12-23 & 17 & 35 & 3 & 71 & 28 & 1808 & 50238 & 10540 & 268.47 & 651.28 \\ \hline
$W_{17}$  & 2024-12-14 -- 2024-12-18 & 17 & 38 & 3 & 64 & 27 & 1917 & 52736 & 11630 & 257.75 & 659.57 \\ \hline
$W_{18}$ & 2024-11-04 -- 2024-11-08 & 18 & 46 & 4 & 72 & 27 & 2233 & 86959 & 20240 & 220.54 & 702.36 \\ \hline
$W_{19}$ & 2024-10-20 -- 2024-10-24 & 17 & 40 & 4 & 78 & 30 & 1996 & 60449 & 23500 & 234.08 & 721.61 \\ \hline
$W_{20}$ & 2024-10-15 -- 2024-10-19 & 16 & 36 & 4 & 85 & 27 & 1825 & 50790 & 17170 & 227.70 & 739.14 \\ \hline

\end{tabular}%
}
\label{instances-characteristics}
\end{table}

\subsection{Fleet characteristics and transportation costs}
The heterogeneous fleet includes eight different truck configurations, 
varying by capacity and whether they are self-loading. Seasonal 
restrictions further influence effective payload capacity. Contractors 
operate between 1 and 30 trucks each, resulting in diverse availability 
levels across the network. The majority of contractors own a single 
truck. Overall, the fleet comprises 299 trucks operated by 59 
contractors, with a weekly average of 64 active trucks and 26 
contractors. The fleet includes 133 self-loader trucks, with an 
average of 29 per week. Eight truck configurations are available 
overall, with an average of 6 configurations active per week.

As provided by our industrial partner, Table~\ref{tab:haul_stop_costs} 
reports the estimated hauling and stop costs (in \$/hour) for each 
truck configuration across different seasons, highlighting the seasonal 
variability in fleet operating costs.

\begin{table}[H]
\centering
\scriptsize
\setlength{\tabcolsep}{4pt}
\renewcommand{\arraystretch}{1.2}
\begin{tabular}{l|cc|cc|cc|cc|cc}
\hline
 & \multicolumn{2}{c|}{Winter 23--24} 
 & \multicolumn{2}{c|}{Summer 23} 
 & \multicolumn{2}{c|}{Winter 22--23} 
 & \multicolumn{2}{c|}{Summer 22} 
 & \multicolumn{2}{c}{Winter 23--24} \\
\cline{2-11}
Configuration & Hauling & Stop & Hauling & Stop & Hauling & Stop 
              & Hauling & Stop & Hauling & Stop \\
 & (\$/Hr) & (\$/Hr) & (\$/Hr) & (\$/Hr) & (\$/Hr) & (\$/Hr) 
 & (\$/Hr) & (\$/Hr) & (\$/Hr) & (\$/Hr) \\
\hline
QUADFLEET  & 107.23 & 96.67 & 104.01 & 93.77 & 101.87 & 91.84 
           & 98.81 & 89.08 & 110.40 & 99.84 \\
TRIFLEET   & 107.23 & 96.67 & 104.01 & 93.77 & 101.87 & 91.84 
           & 98.81 & 89.08 & 110.40 & 97.29 \\
QUADSELF   & 112.69 & 102.13 & 109.31 & 99.07 & 107.06 & 97.02 
           & 103.84 & 94.11 & 116.67 & 106.11 \\
TRISELF    & 112.69 & 102.13 & 109.31 & 99.07 & 107.06 & 97.02 
           & 103.84 & 94.11 & 116.67 & 103.72 \\
BTRAIN     & 113.24 & 97.84 & 109.84 & 94.90 & 107.58 & 92.95 
           & 104.35 & 90.16 & 116.75 & 101.35 \\
BTRAINSELF & 115.52 & 100.12 & 112.05 & 97.12 & 109.74 & 95.11 
           & 106.45 & 92.26 & 120.61 & 105.21 \\
TRD        & 115.46 & 102.15 & 112.00 & 99.09 & 109.69 & 97.04 
           & 106.40 & 93.13 & 118.09 & 104.74 \\
TRS        & 118.37 & 106.05 & 114.82 & 102.87 & 112.45 & 100.75 
           & 109.08 & 97.73 & 121.30 & 108.98 \\
\hline
\end{tabular}
\caption{Hauling and waiting costs (\$/Hr) by fleet configuration 
across different seasons.}
\label{tab:haul_stop_costs}
\end{table}
In our dataset, the travel speed between each forest block and mill is defined at the level of each pair $(f,m)$ and varies across seasons. 
This information allows us to compute the effective travel time $t_{fm} = d_{fm} / v_{fm}$, where $d_{fm}$ is the distance (km) and $v_{fm}$ is the observed average speed (km/h).  
To better characterize the network, we classify each connection into three categories:  
\emph{slow roads} ($v_{fm} < 60$ km/h),  
\emph{medium roads} ($60 \leq v_{fm} < 80$ km/h), and  \emph{fast roads} ($v_{fm} \geq 80$ km/h). 

\subsection{Test plan}
As described previously, we consider three categories of $\mathcal{LTRSP}$ weekly instances $\mathcal{W}$. Based on these categories, we compare the performance of the following methods:

\begin{itemize}
    \item \textbf{$\mathcal{P}_m$-$\mathcal{MILP}_d$}: 
    Main model solved by CPLEX with default parameter settings solved to a MILP gap $\varepsilon = 1\%$.
    
    \item \textbf{$\mathcal{P}_m$-$\mathcal{MILP}_t$}: 
    Main model solved by CPLEX with tuned parameters solved to a MILP gap $\varepsilon = 1\%$.
    
    \item \textbf{$\mathcal{P}_m$-$\mathcal{RF-FO}_d$}: 
    Hybrid metaheuristic (RF-FO) for the main model with default parameters.
    
    \item \textbf{$\mathcal{P}_m$-$\mathcal{RF-FO}_t$}: 
    Hybrid metaheuristic (RF-FO) for the main model with tuned parameters.
\end{itemize}

The performance of each method is assessed using two well-established criteria:  
(i) the runtime $T$ (in minutes), and  
(ii) the optimality gap relative to the \emph{best known objective value} $\mathcal{GAP}_{\text{BS}}$ (in \%).  

The best known objective for each instance $W_i$ is defined as
\[
z^{\star}(W_i) \;=\; \min \{\, z_m(W_i) \;\mid\; m \in 
\{\mathcal{P}_m\mbox{-}\mathcal{MILP}_d,\;
   \mathcal{P}_m\mbox{-}\mathcal{MILP}_t,\;
   \mathcal{P}_m\mbox{-}\mathcal{RF\mbox{-}FO}_d,\;
   \mathcal{P}_m\mbox{-}\mathcal{RF\mbox{-}FO}_t\} \,\},
\]
where $z_m(W_i)$ denotes the objective value obtained by method $m$ for instance $W_i$.  

The relative gap of method $m$ on instance $W_i$ is then given by
\[
\mathcal{GAP}_{\text{BS}}(m,W_i) \;=\;
\frac{z_m(W_i) - z^{\star}(W_i)}{z^{\star}(W_i)} \times 100.
\]

All methods rely on the same model $\mathcal{P}_m$, incorporating the constraints described in Section~\ref{prob-formulation}. The routing network for each weekly instance $W_i$ is generated using modular preprocessing functions that systematically eliminate infeasible arcs. The entire framework was implemented in Python.  

All computational experiments were conducted on a workstation equipped with a 3.20~GHz Intel\textsuperscript{\textregistered} Core\texttrademark{} i7-8700 processor, 64~GB of system memory, and running a Linux operating system. The Integer Linear Programs (ILPs) were solved using the IBM ILOG CPLEX Optimizer (version~22.1.1.0), with the PuLP library (version~3.3.0) used as the modeling interface.

\subsection{$\mathcal{LTRSP}$ numerical results} \label{plann-weeks}
We compare various methods by reporting the execution
time $T$ in minutes and the gap described above
$\mathcal{GAP}_{\text{BS}}$. Table ~\ref{tab:perf_by_cat} reports results of different instances.
\begin{sidewaystable*}[p]
\centering
\caption{Performance comparison of different methods across all instances. The values reported in the Objective columns are scaled by a factor of $10^{2}$ for readability.
}
\label{tab:perf_by_cat}
\scriptsize
\begin{tabular}{l|ccc|ccc|ccc|ccc|c}
\toprule
\multirow{2}{*}{Instance}
& \multicolumn{3}{c|}{$\mathcal{P}_m$-$\mathcal{MILP}_d$}
& \multicolumn{3}{c|}{$\mathcal{P}_m$-$\mathcal{MILP}_t$}
& \multicolumn{3}{c|}{$\mathcal{P}_m$-$\mathcal{RF\mbox{-}FO}_d$}
& \multicolumn{3}{c|}{$\mathcal{P}_m$-$\mathcal{RF\mbox{-}FO}_t$}
& \multirow{2}{*}{Best Known Obj.} \\
\cmidrule(lr){2-4}\cmidrule(lr){5-7}\cmidrule(lr){8-10}\cmidrule(lr){11-13}
& T (min) & Objective  & $\mathcal{GAP}_{\text{BS}}$ (\%)
& T (min) & Objective & $\mathcal{GAP}_{\text{BS}}$ (\%)
& T (min) & Objective & $\mathcal{GAP}_{\text{BS}}$ (\%)
& T (min) & Objective & $\mathcal{GAP}_{\text{BS}}$ (\%)
&  \\
\midrule
\multicolumn{14}{c}{\textbf{Small Instances} ($|A| < 30{,}000$)} \\
$W_1$  & 13.99 & 24023 & 0.00 & 8.60 & 24197 & 0.72 & 7.84 & 24035 & 0.05 & 9.25 & 24089 & 0.27 & 24023 \\
$W_2$  & 7.78  & 33752 & 0.00 & 6.21 & 34068 & 0.94 & 5.28 & 34188 & 1.29 & 6.04 & 33951 & 0.59 & 33752 \\
$W_3$  & 13.32 & 15148 & 1.55 & 5.94 & 14917 & 0.00 & 2.10 & 15009 & 0.62 & 3.82 & 15033 & 0.78 & 14917 \\
$W_4$  & 8.15  & 30002 & 0.67 & 7.21 & 29804 & 0.00 & 4.00 & 29890 & 0.29 & 5.62 & 30018 & 0.72 & 29804 \\
$W_5$  & 20.53 & 40426 & 0.49 & 8.96 & 40740 & 1.27 & 5.95 & 40230 & 0.00 & 5.65 & 40318 & 0.22 & 40230 \\
\midrule
\textbf{Avg} & \textbf{12.75} & -- & \textbf{0.54} & \textbf{7.78} & -- & \textbf{0.58} & \textbf{5.03} & -- & \textbf{0.45} & \textbf{6.08} & -- & \textbf{0.52} & -- \\
\midrule
\multicolumn{14}{c}{\textbf{Medium Instances} ($30{,}000 \le |A| \le 50{,}000$)} \\
$W_6$   & 15.04 & 39966 & 0.45 & 10.88 & 40089 & 0.76 & 6.22 & 39805 & 0.05 & 9.57 & 39786 & 0.00 & 39786 \\
$W_7$   & 38.52 & 55708 & 0.43 & 23.15 & 55674 & 0.37 & 15.85 & 55573 & 0.38 & 16.38 & 55467 & 0.00 & 55467 \\
$W_8$   & 28.97 & 42232 & 0.08 & 11.08 & 42304 & 0.25 & 8.93 & 42198 & 0.00 & 9.50 & 42198 & 0.00 & 42198 \\
$W_9$   & 25.78 & 41453 & 0.44 & 15.83 & 41629 & 0.86 & 9.54 & 41287 & 0.03 & 12.01 & 41273 & 0.00 & 41273 \\
$W_{10}$& 14.89 & 37709 & 0.00 & 11.67 & 37844 & 0.36 & 6.80 & 37727 & 0.05 & 7.15 & 37863 & 0.41 & 37709 \\
$W_{11}$& 18.85 & 46636 & 0.12 & 15.31 & 46743 & 0.35 & 15.62 & 46594 & 0.03 & 11.82 & 46581 & 0.00 & 46581 \\
$W_{12}$& 31.12 & 41746 & 1.66 & 22.10 & 41700 & 1.55 & 11.03 & 41063 & 0.00 & 12.60 & 41099 & 0.09 & 41063 \\
$W_{13}$& 13.44 & 32321 & 0.00 & 8.35  & 32344 & 0.07 & 7.26 & 32288 & 0.00 & 9.16 & 32299 & 0.03 & 32288 \\
$W_{14}$& 20.60 & 80175 & 1.51 & 13.30 & 79320 & 0.42 & 13.47 & 79341 & 0.45 & 19.19 & 78984 & 0.00 & 78984 \\
\midrule
\textbf{Avg} & \textbf{23.25} & -- & \textbf{0.63} & \textbf{14.63} & -- & \textbf{0.44} & \textbf{10.75} & -- & \textbf{0.21} & \textbf{11.71} & -- & \textbf{0.17} & -- \\
\midrule
\multicolumn{14}{c}{\textbf{Large Instances} ($|A| > 50{,}000$)} \\
$W_{15}$& 38.63 & 49386 & 0.14 & 41.51 & 49330 & 0.02 & 18.56 & 49315 & 0.00 & 18.04 & 49337 & 0.04 & 49315 \\
$W_{16}$& 35.71 & 42307 & 0.13 & 42.00 & 42568 & 0.74 & 15.50 & 42295 & 0.10 & 17.65 & 42254 & 0.00 & 42254 \\
$W_{17}$& 56.80 & 48445 & 1.85 & 44.54 & 48087 & 1.11 & 17.91 & 47562 & 0.00 & 20.73 & 47566 & 0.01 & 47562 \\
$W_{18}$& 28.36 & 77351 & 1.11 & 31.03 & 76989 & 0.63 & 15.25 & 76504 & 0.00 & 25.55 & 76713 & 0.27 & 76504 \\
$W_{19}$& 78.59 & 89172 & 0.16 & 46.47 & 89030 & 0.00 & 18.95 & 89392 & 0.41 & 21.80 & 89402 & 0.42 & 89030 \\
$W_{20}$& 37.13 & 62388 & 0.25 & 36.74 & 62780 & 0.87 & 16.78 & 62435 & 0.31 & 18.84 & 62232 & 0.00 & 62232 \\
\midrule
\textbf{Avg} & \textbf{45.87} & -- & \textbf{0.79} & \textbf{40.38} & -- & \textbf{0.56} & \textbf{16.11} & -- & \textbf{0.14} & \textbf{20.77} & -- & \textbf{0.12} & -- \\
\bottomrule
\end{tabular}
\end{sidewaystable*}

These results highlight several important performance aspects across the different instance categories. The exact approaches $\mathcal{P}_m$-$\mathcal{MILP}_d$ and $\mathcal{P}_m$-$\mathcal{MILP}_t$, which enforce a stopping gap of $\varepsilon = 1\%$, provide reliable reference solutions across all instances. The choice of this tight optimality gap is motivated by extensive discussions with our industrial partner, for whom even marginal cost improvements are highly valuable due to the narrow profit margins in forestry transportation operations. In practice, any additional cost reduction—down to a few cents per unit—is considered meaningful. However, further tightening the gap below $1\%$ was deemed impractical, as it would lead large computational times caused by the well-known tailing-off effect in branch-and-bound algorithms. Consistently with this observation, the computational effort remains significant: the average runtime $T$ rises from $12.75$ minutes for small instances to more than $45.87$ minutes for large ones. These values confirm the intrinsic difficulty of solving large-scale models, even when the termination criterion is relaxed to $\varepsilon = 1\%$. Solver tuning in $\mathcal{P}_m$-$\mathcal{MILP}_t$ slightly improves performance, reducing the average runtime from $12.75$ to $7.78$ minutes on small instances and from $45.87$ to $40.38$ minutes on large ones without degrading solution quality. The average gaps $\mathcal{GAP}_{\text{BS}}$ are very close between tuned and default MILP solving approach ($0.58\%$ vs.\ $0.54\%$ on small instances, and $0.56\%$ vs.\ $0.79\%$ on large ones). These observations highlight the fact that even parameters calibration was not sufficient to significantly improve the execution time.

The hybrid matheuristic approaches achieve the best overall balance between runtime and solution quality. The default version $\mathcal{P}_m$-$\mathcal{RF\mbox{-}FO}_d$ consistently identifies solutions that are extremely close to the best known objective, with average gaps of $0.45\%$ (small), $0.21\%$ (medium), and only $0.14\%$ (large). At the same time, its runtimes performance is: $5.03$ minutes on small, $10.75$ on medium, and $16.11$ on large instances. These results confirm that the relax-and-fix decomposition significantly accelerates convergence towards high-quality solutions, especially for large-scale instances where the runtime reduction is substantial.
The tuned hybrid variant $\mathcal{P}_m$-$\mathcal{RF\mbox{-}FO}_t$ yields comparable solution quality, with average gaps ranging from $0.52\%$ on small to $0.12\%$ on large instances. However, this comes at the price of a slightly higher runtimes: $6.1$ minutes on small, $11.7$ minutes on medium, and $20.8$ minutes on large instances. This additional computational effort can be attributed to the iterative refinement loops introduced by the tuned configuration. Unlike in the exact approaches, where tuning proved beneficial due to the large number of integer variables that profit from advanced branching and search parameterization, the same mechanism is less efficient in the metaheuristic framework. In $\mathcal{P}_m$-$\mathcal{RF\mbox{-}FO}_t$, many variables are relaxed during decomposition, which limits the advantages of tuning; instead, the refinement overhead becomes the dominant factor, turning tuning into a computational burden rather than an improvement.

Beyond solution quality, reductions in computational time are also of practical importance, even when they are on the order of a few minutes. In the operational context of our industrial partner, weekly routing and scheduling plans are subject to significant uncertainty and frequent last-minute changes, including demand updates, equipment availability, and operational disruptions. As a result, the planning process may need to be executed multiple times within a limited decision window. In such time-constrained settings, shorter execution times are strongly preferred, as they increase the responsiveness of the decision-support system and facilitate rapid re-optimization when new information becomes available.

Overall, the results confirm that the proposed hybrid mataheuristic framework is the most effective approach in practice: it consistently achieves near-optimal solutions in very competitive runtimes, with gaps to the best known objective almost always below $1\%$. The robustness and scalability of the proposed framework are thus demonstrated, as all real-world problem instances can be solved to near-optimality within practical computing times.

Figure~\ref{fig:runtime_decomposition} analyzes the average runtime decomposition of 
$\mathcal{P}_m$-$\mathcal{MILP}_d$ by instance category. Three phases are identified: 
(i) the search for the first feasible incumbent, (ii) the tailing-off phase after the 
integrality gap reaches $5\%$, and (iii) the remaining branching operations. The 
tailing-off effect dominates the runtime, accounting for more than half of the total 
time across all categories and increasing with instance size (from about $9$ minutes 
on small instances to over $23$ minutes on large ones). The time required to find the 
first incumbent also scales with problem size, representing roughly one quarter of the 
total runtime ($4$, $7$, and more than $10$ minutes for small, medium, and large 
instances, respectively), which reflects the difficulty of generating high-quality 
feasible solutions early in the search. In contrast, the contribution of other branching 
operations remains limited (around $10$--$15\%$) and grows smoothly with instance size. 
Overall, these observations indicate that the tailing-off effect remains the dominant 
bottleneck for large-scale instances, motivating the use of hybrid and matheuristic 
strategies. Figure~\ref{fig:solver_profile_instance3} further illustrates this behavior 
on an instance, where the best bound improves gradually while the incumbent 
decreases in stepwise jumps, eventually reaching a gap of about $1\%$.

\begin{figure}[H]
\centering
\begin{minipage}[t]{0.48\textwidth}
\centering
\begin{tikzpicture}
\begin{axis}[
    ybar stacked,
    bar width=12pt,
    width=\linewidth,
    height=6.5cm,
    ymin=0,
    ymax=60,
    ylabel={Runtime (min)},
    symbolic x coords={Small, Medium, Large},
    xtick=data,
    enlarge x limits=0.25,
    ylabel style={font=\small},
    tick label style={font=\small},
    axis line style={thick},
    grid=major,
    grid style={dashed,gray!30},
    legend style={at={(0.02,0.98)},anchor=north west,draw=none,fill=none,font=\small}
]

\addplot+[fill=teal!60] coordinates {(Small,    3.8) (Medium,6.7) (Large,12.3)};
\addplot+[fill=orange!70] coordinates {(Small,8.3) (Medium,15.6) (Large,25.1)};
\addplot+[fill=purple!50] coordinates {(Small,1.8) (Medium,2.8) (Large,7.6)};

\legend{\tiny Incumbent search, \tiny Tailing-off, \tiny Other branching}

\end{axis}
\end{tikzpicture}
\caption{Average runtime decomposition across categories.}
\label{fig:runtime_decomposition}
\end{minipage}\hfill
\begin{minipage}[t]{0.48\textwidth}
\centering
\begin{tikzpicture}
\begin{axis}[
    width=\linewidth, height=6.5cm,
    xlabel={Time (s)},
    ylabel={Objective value},
    ymajorgrids, xmajorgrids,
    ymin=62000, ymax=67500, xmin=0, xmax=1300,
    legend style={at={(0.02,0.98)},anchor=north west,draw=none,fill=none,font=\small}
]

\addplot[thick, dashed, black]
coordinates {
  (135,62858.5)
  (374,62951.5)
  (710,63043.8)
  (755,63114.3)
  (761,63243.8)
  (799,63303.3)
  (835,63415.3)
  (875,63482.8)
  (879,63484.6)
  (1200,63484.6)
};

\addplot[thick, blue, mark=*]
coordinates {
  (158,66924)
  (214,65889)
  (455,64780)
  (879,64400)
  (1200,64300)
};

\legend{\tiny Lower bound, \tiny Incumbent}

\end{axis}
\end{tikzpicture}
\caption{Solver profile on one example instance to a gap of 1 \%.}
\label{fig:solver_profile_instance3}
\end{minipage}
\end{figure}

\begin{table}[H]
\centering
\caption{Comparison of operational KPIs across instance categories.}
\label{tab:ops_perf}
\scriptsize
\begin{tabular}{llcccc}
\toprule
Category & Instance & $D^{tot}$ (km) & $W$ (h) & $\Delta$ (GMT) & $|V|$ \\
\midrule
\multicolumn{6}{c}{\textbf{Small Instances} ($|A| < 30{,}000$)} \\
 & $W_1$  & 13995.9 & 66.1 & 20 & 23 \\
 & $W_2$  & 17663.2 & 116.7 & 46 & 38 \\
 & $W_3$  & 8556.7 & 44.6& 18 & 18 \\
 & $W_4$  & 18120.0 & 80.5 & 39 & 37 \\
 & $W_5$  & 23943.2 & 108.8 & 48 & 48 \\
\textbf{Average (Small)} & & 16455.8 & 83.4 & 34.2 & 33 \\
\midrule
\multicolumn{6}{c}{\textbf{Medium Instances} ($30{,}000 \le |A| \le 50{,}000$)} \\
 & $W_6$   & 26971.4 & 93.6 & 35 & 36 \\
 & $W_7$   & 32797.7 & 145.4 & 57 & 48 \\
 & $W_8$   & 27561.2 & 93.8 & 54 & 50 \\
 & $W_9$   & 27232.4 & 89.3 & 48 & 52 \\
 & $W_{10}$& 24506.1 & 81.1 & 40 & 44 \\
 & $W_{11}$& 30996.0 & 106.0 & 39 & 43 \\
 & $W_{12}$& 23854.6 & 98.9 & 43 & 46 \\
 & $W_{13}$& 21327.3 & 78.4 & 43 & 41 \\
 & $W_{14}$& 44605.5 & 274.8 & 62 & 63 \\
\textbf{Average (Medium)} & & 28827.9 & 117.3 & 46.8 & 47.0 \\
\midrule
\multicolumn{6}{c}{\textbf{Large Instances} ($|A| > 50{,}000$)} \\
 & $W_{15}$& 30805.5 & 105.9& 58 & 44 \\
 & $W_{16}$& 25683.3 & 117.3 & 49 & 41 \\
 & $W_{17}$& 31884.3 & 109.5 & 52 & 53 \\
 & $W_{18}$& 47668.4 & 189.3 & 62 & 53 \\
 & $W_{19}$& 50888.8 & 212.4 & 66 & 53 \\
 & $W_{20}$& 35267.6 & 131.9 & 52 & 37 \\
\textbf{Average (Large)} & & 37033.0 & 144.8 & 56.5 & 47 \\
\bottomrule
\end{tabular}
\end{table}

The analysis of operational KPIs reported in Table~\ref{tab:ops_perf} provides further insights into the impact of instance size on resource utilization. Let $D^{\text{tot}}$ denote the total traveled distance (km), $W$ the cumulative waiting time (h), $\Delta$ the unmet demand, and $|V|$ the number of vehicles used.  

As expected, both $D^{\text{tot}}$ and $W$ increase with the instance size. On average, small instances yield $D^{\text{tot}} = 16\,\,456$ km and $W = 83.4$ h, while medium instances require $D^{\text{tot}} = 28\,\,828$ km and $W = 117.3$ h, i.e., an increase of $+75.0\%$ and $+40.6\%$, respectively. For large instances, the averages rise further to $D^{\text{tot}} = 37\,\,033$ km and $W = 144.8$ h, confirming the growth in routing complexity and synchronization requirements when the number of arcs increases. Similarly, the number of vehicles $V$ grows with the problem size, from an average of $33$ in small instances to $47$ for medium and large ones. This increase shows that larger instances require more routes and also spread the workload across more vehicles, mainly due to heterogeneous truck configurations and tighter time-window constraints.

Unmet demand $\Delta (GMT)$ also increases with instance size, averaging $34.2$ for small, $46.8$ for medium, and $56.5$ for large instances. This reflects the growing difficulty of satisfying all requests as networks become denser and capacity constraints tighten.  
It is worth mentioning that we also conducted alternative experiments where $\Delta$ was equal to zero across all instances. This outcome was obtained because the demand satisfaction constraint was enforced with greater-than inequalities and unmet demand was sufficiently penalized in the objective function, i.e.,
\[
\sum_{t \in T}\sum_{f \in F}\sum_{v \in V}\sum_{a \in A_{f}^{mp}} q_v x_{avt} + \delta_{mp} \geq d_{mp}, \,\, \forall \, m \in M, \forall \, p \in P \, \, t \in T
\]
which permits slight oversupply. This assumption, validated with our industrial partner, is operationally acceptable for mills with adequate storage capacity. From a business standpoint, oversupply is preferable to unmet demand, though it comes at the cost of longer total total distance  and higher fleet usage.

\subsection{Evaluation of advanced operational constraints  }
\label{sec:synthetic_constraints}

The results reported earlier are obtained using real operational data and focus
on the most frequent planning configurations encountered by our industrial
partner and formulated in model $\mathcal{P}_m$. However, forestry transportation planning is subject to intermittent but important operational constraints including partial truckloads, contractual priority deliveries, and driver
change requirements. Although these situations do not occur on a daily basis,
they can arise at any time and must be handled without redesigning the planning model. To demonstrate the extensibility of the proposed framework, we
conduct an additional experimental study based on the same real instances but endowed with synthetic scenarios of these operational advanced constraints.

The synthetic constraints used in this study were designed 
based on discussions with our forestry partner to ensure 
that the experiments reflect near-realistic planning 
situations:
\begin{itemize}
    \item \textbf{Partial truckloads:} between 2 and 5 
    occurrences per week.
    \item \textbf{Priority rules:} applied to between 
    10\% and 20\% of mills.
    \item \textbf{Driver change requirement:} concerns 
    between 10\% and 20\% of drivers.
\end{itemize}
\noindent

For each constraint configuration, we generate 10 scenarios for every instance $W_i$ and measure the marginal impact on:
(i) the average routing gap with respect to the lower bound when $\mathcal{RF-FO}_d$ is used for solving, and
(ii) the computational runtime in comparison to the baseline instance without additional constraints.

Let $z_i^{\mathrm{LB}}$ denote the lower bound for instance $W_i$, and $z_{i,s}^{\mathrm{RF\text{-}FO}}$ the objective value obtained for scenario $s \in \{1,\dots,|S|\}$.
The average routing gap over all instances is computed as
\[
\mathrm{Gap}_{\mathrm{synth}}(\%)
=
\frac{100}{|\mathcal{W}|}
\sum_{W_i \in \mathcal{W}}
\left(
\frac{1}{|S|}
\sum_{s=1}^{|S|}
\frac{z_{i,s}^{\mathrm{RF\text{-}FO}} - z_i^{\mathrm{LB}}}{z_i^{\mathrm{LB}}}
\right).
\]

Let $t_i^{\mathrm{base}}$ be the runtime of the baseline instance and $t_{i,s}^{\mathrm{const}}$ the runtime under additional constraints.
The average relative runtime increase (in percentage) over all instances is defined as
\[
\Delta t(\%)
=
\frac{100}{|\mathcal{W}|}
\sum_{W_i \in \mathcal{W}}
\left(
\frac{1}{|S|}
\sum_{s=1}^{|S|}
\frac{t_{i,s}^{\mathrm{const}} - t_i^{\mathrm{base}}}{t_i^{\mathrm{base}}}
\right).
\]

Table~\ref{tab:synthetic_results_details} reports the detailed results of this study. The results quantify the additional economic and computational burden
induced by each operational constraint and by their joint activation.

\begin{table}[H]
\centering
\caption{Impact of advanced operational constraints on solution quality and runtime.}
\label{tab:synthetic_results_details}
\begin{tabular}{lcc}
\toprule
\textbf{Constraint scenario}
& $\boldsymbol{\mathrm{Gap}_{\mathrm{synth}}}$ \textbf{(\%)}
& $\boldsymbol{\Delta t}$ \textbf{(\%)} \\
\midrule
Partial loads
& 2.1 & 26.4 \\
Priority rules
& 1.2 &     15.7 \\
Driver change requirement
& 1.5 & 23.2 \\
All constraints combined
& 3.4 & 49.6 \\
\bottomrule 
\end{tabular}
\end{table}

The results indicate that each constraint type induces only a limited degradation in
solution quality produced by \textit{RF-FO}, with $\mathrm{Gap}_{\mathrm{syth}}$
remaining below 4\% even when all constraints are simultaneously activated. This
confirms that $\mathcal{RF-FO}_d$ effectively preserves near-optimal solutions under
advanced operational settings. Similarly, the additional computational effort remains controlled, with a maximum
average runtime increase of 50\% in the most restrictive configuration. Notably, even
with all advanced operational constraints enabled, the resulting instances can still
be solved in under 30 minutes, which is compatible with practical weekly planning
requirements.

\subsection{Impact of discretization interval length}
\label{sec:discretization_impact}

The choice of the time discretization interval $\Delta$ directly affects how
waiting time is represented in the planning model and, consequently, the
robustness of routing and scheduling decisions when executed under real-world
operational variability. In Section~\ref{queue}, we derived a theoretical
discretization interval $\Delta^\star$ by balancing the expected waiting time
and expected idle time. While this analysis provides theoretical guidance, an
empirical sensitivity assessment is conducted to understand how discretization
interacts with execution uncertainty.

To do this, we conducted a two-stage evaluation. First, for each discretization
interval $\Delta$, the planning model is solved using deterministic service
times equal to their known average. Second, the resulting plans are evaluated
under stochastic execution conditions. Specifically, loading and unloading
times are simulated using independent realizations drawn from a normal
distribution with mean $30$~minutes and standard deviation $5$~minutes. For
each discretization value, $20$ independent execution scenarios are generated,
and the objective function is averaged across scenarios and all instances.

This evaluation focuses on the variation of the objective function under
realistic simulated service times, thereby capturing the true operational cost
of executing a plan designed under a given discretization granularity. The
reported cost variation corresponds to the average relative increase of this
objective with respect to the original objective function. Formally, for a
given discretization interval $\Delta$, the average execution-induced objective
variation is defined as
\begin{equation}
\label{eq:avg_obj_variation}
\Delta Z ^{\mathrm{exec}}(\Delta)
=
\frac{1}{|\mathcal{W}| \cdot |S|}
\sum_{W_i |\in \mathcal{W}}
\sum_{s=1}^{|S|}
\frac{
z^{\mathrm{exec}}_{i,s}(\Delta) - z^{\mathrm{plan}}_i(\Delta)
}{
z^{\mathrm{plan}}_i(\Delta)
}
\times 100,
\end{equation}
where $z^{\mathrm{plan}}_i(\Delta)$ denotes the objective value obtained by the
planning model for instance $W_i$ using discretization interval $\Delta$, and
$z^{\mathrm{exec}}_{i,s}(\Delta)$ is the realized objective value under execution
scenario $s \in \{1,\dots,|S|\}$. The metric is averaged over all instances and execution scenarios.

The results are summarized in Table~\ref{tab:discretization_comparison}. Small
discretization intervals, such as $\Delta = 15$ minutes, lead to a large
increase in the objective function. This behavior is explained by the lack of
temporal slack in the plan, which makes the solution highly sensitive to
variations in loading and unloading times and results in truck backlogs during
execution. As the discretization interval increases, the average objective
variation decreases and reaches its minimum around $\Delta = 45$ minutes. For
larger discretization intervals, the objective variation increases again. In
this case, the model introduces excessive temporal slack, which translates into
idle time and higher waiting costs. Overall, these results highlight the
existence of a trade-off between robustness to execution variability and
temporal discretization. They empirically confirm that intermediate
discretization values provide the most balanced and robust performance under
stochastic service times.

\begin{table}[H]
\centering
\caption{Average execution-induced objective variation for different discretization intervals.}
\label{tab:discretization_comparison}
\begin{tabular}{c c}
\toprule
\textbf{Discretization interval $\Delta$ (min)}
& $\boldsymbol{\Delta Z^{\mathrm{exec}}(\Delta)}$ \textbf{(\%)} \\
\midrule
15  & $+31.20$ \\
30  & $+9.89$  \\
45  & $+3.82$  \\
60  & $+12.03$ \\
75  & $+18.34$ \\
\bottomrule
\end{tabular}
\end{table}

\subsection{Modeling and solver behavior study}
\label{accele-strategy}
In addition to the baseline performance analysis presented earlier, we now
investigate key modeling and solver design choices that significantly
impact solution quality and runtime:
\begin{enumerate}
    \item Symmetry breaking across identical vehicles within each contractor.
    \item Replacing big-$M$ formulations with indicator constraints.
    \item The effect of the penalty coefficient on unmet demand.
\end{enumerate}
The following paragraphs report the experimental results 
for each component, using the instances described previously.

\paragraph{Impact of vehicle symmetry breaking.}
Figure~\ref{fig:sb_runtime_all} reports solver runtimes with and without 
symmetry breaking for each instance. The baseline runtimes correspond to 
$\mathcal{P}_m$-$\mathcal{RF\mbox{-}FO}_d$. Symmetry breaking reduces 
solver runtimes by an average of approximately $27\%$ across the $20$ 
weekly instances. The improvement is consistent across all instance sizes: 
small instances exhibit limited but non-negligible reductions, typically 
around $1$--$2$ minutes per run; medium-sized instances benefit from more 
substantial savings of up to $4$ minutes; and large-scale cases experience 
the most pronounced reductions, often exceeding $5$ minutes per instance.
These results confirm the effectiveness of symmetry breaking as a practical 
mechanism for eliminating redundant branching and accelerating convergence, 
particularly in heterogeneous fleet settings.
Accordingly, symmetry breaking is systematically incorporated into all 
solution approaches preceding $\mathcal{P}_m$-$\mathcal{MILP}_d$, 
$\mathcal{P}_m$-$\mathcal{MILP}_t$, $\mathcal{P}_m$-$\mathcal{RF\mbox{-}FO}_d$, 
and $\mathcal{P}_m$-$\mathcal{RF\mbox{-}FO}_t$.

\begin{figure}[H]
\centering
\begin{tikzpicture}
\begin{axis}[
    width=0.95\textwidth,
    height=7cm,
    ymajorgrids,
    ymin=0,
    symbolic x coords={
        W1,W2,W3,W4,W5,W6,W7,W8,W9,W10,
        W11,W12,W13,W14,W15,W16,W17,W18,W19,W20
    },
    xtick=data,
    xticklabel style={rotate=45,anchor=east},
    ylabel={Runtime (min)},
    xlabel={},
    bar width=6pt,
    legend style={
        at={(0.02,0.98)},
        anchor=north west,
        draw=none,
        fill=none
    }
]
\addplot+[ybar,fill=gray!60,bar shift=-2pt] coordinates {
(W1,9.40) (W2,6.45) (W3,2.55) (W4,5.05) (W5,6.45)
(W6,6.75) (W7,20.40) (W8,11.20) (W9,11.35) (W10,8.70)
(W11,20.30) (W12,14.50) (W13,9.10) (W14,17.60) (W15,25.20)
(W16,21.40) (W17,24.90) (W18,18.90) (W19,28.40) (W20,22.00)
};
\addlegendentry{Baseline (no SB)}

\addplot+[ybar,fill=gray!30,bar shift=+2pt] coordinates {
(W1,7.84) (W2,5.28) (W3,2.10) (W4,4.00) (W5,5.95)
(W6,6.22) (W7,15.85) (W8,8.93) (W9,9.54) (W10,6.80)
(W11,15.62) (W12,11.03) (W13,7.26) (W14,13.47) (W15,18.56)
(W16,15.50) (W17,17.91) (W18,15.25) (W19,18.95) (W20,16.78)
};
\addlegendentry{With SB}
\end{axis}
\end{tikzpicture}
\caption{Runtime comparison with and without symmetry breaking for all instances.}
\label{fig:sb_runtime_all}
\end{figure}

\paragraph{Impact of indicator constraints.}
Table~\ref{tab:indicators_cat} summarizes the impact of replacing big-$M$ 
constraints with indicators. Across all categories, runtimes improve by 
about $23\%$ on average. For small instances, the average runtime decreases 
from $7.21$ to $5.03$ minutes, while for medium instances it drops from 
$13.52$ to $10.75$ minutes. For large instances, the reduction is from 
$19.70$ to $16.11$ minutes. These results confirm that indicator 
constraints provide a more faithful relaxation of time windows, improving 
numerical stability and reducing the branch-and-bound effort needed to 
close the integrality gap.

\begin{table}[H]
\centering
\caption{Big-$M$ vs indicator constraints across instance categories.}
\label{tab:indicators_cat}
\small
\begin{tabular}{lcc}
\toprule
Category & Setup & Runtime (min) \\
\midrule
\multirow{2}{*}{Small} 
  & Big-$M$    & 7.21 \\
  & Indicators & 5.03 \\
\midrule
\multirow{2}{*}{Medium} 
  & Big-$M$    & 13.52 \\
  & Indicators & 10.75 \\
\midrule
\multirow{2}{*}{Large} 
  & Big-$M$    & 19.70 \\
  & Indicators & 16.11 \\
\midrule
\textbf{Avg Reduction} 
  & -- & \textbf{23\%} \\
\bottomrule
\end{tabular}
\end{table}

\paragraph{Penalty coefficient on unmet demand.}
The final modeling choice concerns the penalty coefficient $c$ applied to 
total unmet demand $\underline{\Delta_{tot}}$. This term enters the 
objective function as $c \cdot \underline{\Delta_{tot}}$, where 
$\underline{\Delta_{tot}}$ is measured in GMT. Its role is to balance 
routing costs against the cost of shortages: a small $c$ allows the solver 
to neglect some demands in favor of shorter routes, while a large $c$ 
enforces near-complete demand coverage. 
Figure~\ref{fig:penalty_sensitivity} reports the average percentage of 
unmet demand across instance categories when varying 
$c \in \{10,30,50,100,500,1000,10^4\}$.

For very small penalty values ($c=10$), the solver tolerates between 
$2.3\%$ and $2.7\%$ of demand remaining unmet on average. Increasing $c$ 
rapidly reduces this percentage: at $c=50$ the average shortage falls 
below $2\%$, and from $c=100$ onward it stabilizes around 
$1.6\%$--$1.8\%$ across all categories. These results confirm that the 
penalty coefficient effectively prioritizes demand satisfaction, with 
diminishing returns beyond $c=100$. Thus, setting $c$ in the order of 
$10^2$--$10^3$ is sufficient to ensure that nearly all demand is served 
without introducing unnecessary numerical stiffness in the model.

\begin{figure}[H]
\centering
\begin{tikzpicture}
\begin{axis}[
  width=0.85\textwidth,height=6.2cm,
  xlabel={Penalty coefficient $c$}, ylabel={Unmet demand (\%)},
  xmode=log, log basis x={10},
  xtick={10,30,50,100,500,1000,10000},
  xticklabels={$10$,$30$,$50$,$10^2$,$5\!\times\!10^2$,$10^3$,$10^4$},
  ymajorgrids, xmajorgrids,
  legend style={at={(0.98,0.98)},anchor=north east,draw=none,fill=none},
  ymin=0
]
\addplot+[mark=*,black] coordinates {
  (10,2.36) (30,2.09) (50,1.70) (100,1.57) (500,1.57) (1000,1.57) (10000,1.57)};
\addlegendentry{Small}

\addplot+[mark=triangle*,black] coordinates {
  (10,2.45) (30,1.99) (50,1.75) (100,1.67) (500,1.67) (1000,1.67) (10000,1.67)};
\addlegendentry{Medium}

\addplot+[mark=square*,black] coordinates {
  (10,2.66) (30,2.23) (50,1.96) (100,1.81) (500,1.81) (1000,1.81) (10000,1.81)};
\addlegendentry{Large}
\end{axis}
\end{tikzpicture}
\caption{Sensitivity of unmet demand to the penalty coefficient $c$.}
\label{fig:penalty_sensitivity}
\end{figure}
\subsection{Business insights}
A key advantage of the proposed optimization framework to manage the $\mathcal{LTRSP}$ is its ability to produce near-optimal solutions within short computational times. This capability provides our industrial partner with several advantages. From an operational perspective, dispatchers benefit from an operational buffer, i.e., additional time to validate and adjust the proposed plans before implementation. In practice, this buffer increases the robustness of the planning process, as it allows for the anticipation and integration of last-minute adjustments arising from stochastic disruptions such as demand fluctuations, vehicle breakdowns, or road closures.

From an economic standpoint, the solution contributes to a significant reduction in transportation costs, which represent a major component of the total operational expenses. Discussions with our partner revealed that the $\mathcal{LTRSP}$ is currently handled manually, relying mostly on repetitive back-and-forth trips with little optimization and almost no systematic backhauling, except when it is trivial. Furthermore, transportation planning is decentralized, with each contractor managing its own fleet independently. This fragmented approach prevents the exploitation of substantial cost-reduction opportunities. In summary, the proposed optimization framework enhances both the operational robustness and the financial efficiency of our forestry partner.

We conducted a comparative experiment over a period of 50 weeks to quantify the potential
impact of the proposed $\mathcal{LTRSP}$ optimization framework. Specifically, we compared
the transportation plans actually executed by our industrial partner with those generated
by the optimization tool. Table~\ref{comparison-real} summarizes the outcomes of this
comparison
\begin{table}[H]
\centering
\caption{Quantified impacts of the proposed solution across financial, environmental, and operational dimensions.}
\label{comparison-real}
\scriptsize
\begin{tabular}{lcccc}
\toprule
\textbf{Dimension} & \textbf{Metric} & \textbf{Impact on the indutrial partner} & \textbf{Impact on Canada scale}& \textbf{Unit} \\
\midrule
Financial     & Cost savings                        & 8$\times 10^9$ CAD  & 377$\times 10^9$  & Annual transport budget \\
              & Reduction in total traveled distance & 35\%&    ------       & Average (km) \\
\midrule
Environmental & CO$_2$ emission reduction           & 4800 &  521 700        & Tons/year \\
\midrule
Operational   & Average planning time                & $<20$& ------       & Minutes/week \\
\bottomrule
\end{tabular}
\end{table}
As shown in Table~\ref{comparison-real}, the proposed optimization framework consistently generates more efficient transportation plans in both financial and environmental dimensions. The cost savings can be formally expressed as follows:
\[
\Delta C = C_{\text{manual}} - C_{\text{opt}},
\]
where $C_{\text{manual}}$ denotes the observed cost of manually generated plans and $C_{\text{opt}}$ the optimized cost. In practice, $C$ is strongly correlated with the total traveled distance $D$, i.e.,
\[
C \propto D,
\]
so that the relative saving ratio is
\[
\text{Saving ratio} = \frac{D_{\text{manual}} - D_{\text{opt}}}{D_{\text{manual}}} \times 100\%.
\]

The estimation of annual financial savings was performed by the management team of our forestry partner, using the reported reduction in distance and unmet demand. For instance, a $5\%$ reduction of the current transportation budget (approximately $30 \times 10^6$ CAD annually) corresponds to
\[
0.05 \times 30 \times 10^6 = 1.5 \times 10^6 \ \text{CAD/year},
\]
 The environmental impact is estimated using standard truck $CO_2$ emission factors per unit distance, adjusted according to the vehicle load. The impact at the Canada scale is estimated as well based on a proportional extrapolation
of the improvements observed for the industrial partner, using transported volume for all Canadian companies as the
scaling factor. These reductions directly translate into fewer vehicles needed and a lower total operational time. Following discussions with our partner, it was agreed that part of these savings could be reinvested in increasing the remuneration rates of transportation contractors, thereby incentivizing their commitment and facilitating the deployment of the optimization tool in practice.

From a decision-support perspective, rapid solution times enable the systematic evaluation of \emph{what-if} scenarios under multiple operational contingencies. The capacity to explore such scenarios in real time strengthens the resilience of the forestry supply chain, reducing the risk of costly disruptions and enhancing coordination across different stakeholders (mills, contractors, and drivers).

Finally, the optimization framework has been integrated into a web-based decision-support system accessible to dispatchers. This system offers advanced visualization capabilities, including interactive Gantt charts Figure~\ref{fig:gantt_business}, which illustrate a typical daily instance of the $\mathcal{LTRSP}$. The Gantt chart displays the planned routes for each truck as sequences of forest blocks $F_i$ and mills $M_i$, with every trip scheduled within a specific time interval.

\begin{figure}[H]
\centering
\includegraphics[width=1\textwidth]{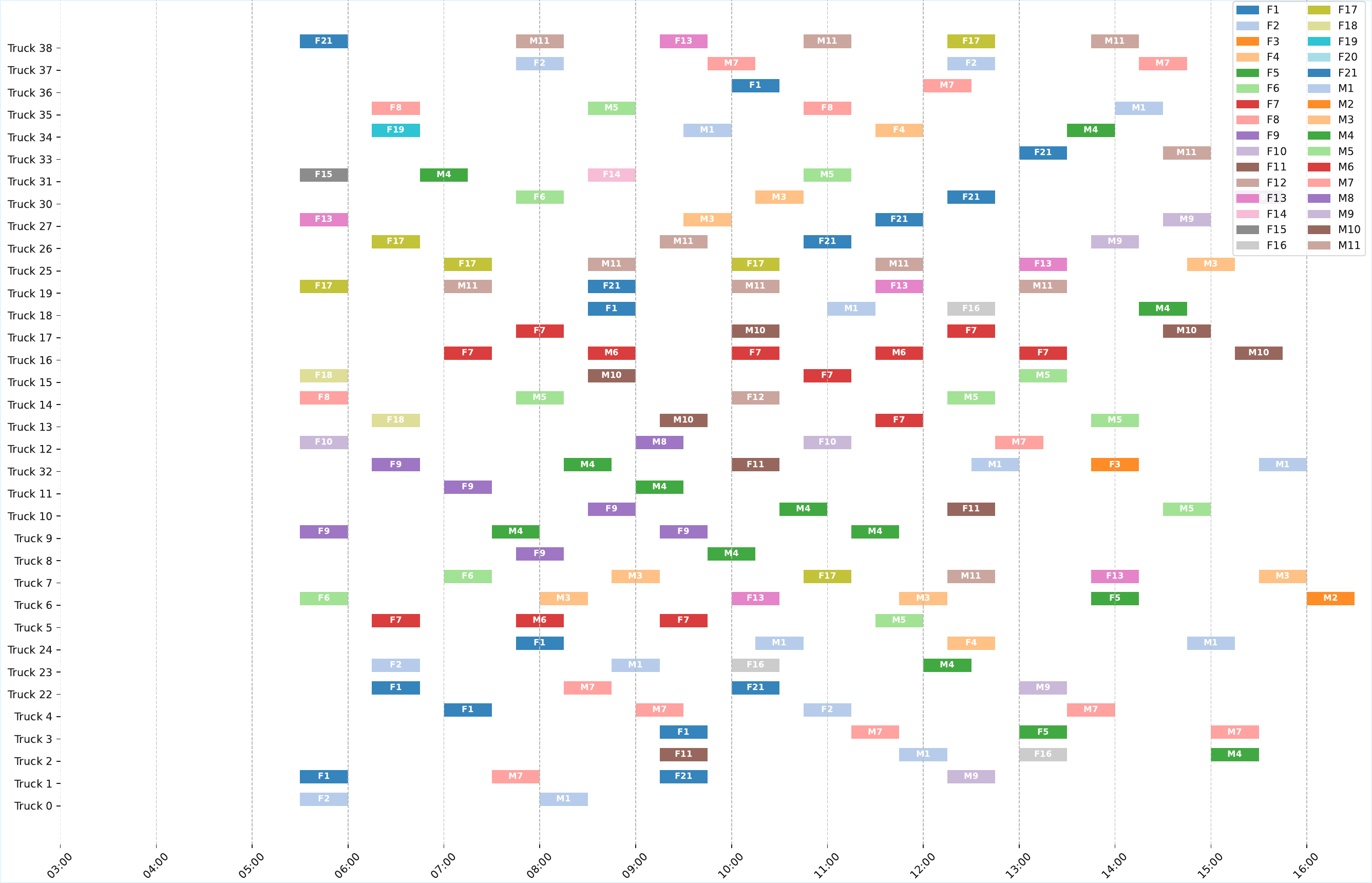} 
\caption{Visualization of daily truck schedules within the web-based application.}
\label{fig:gantt_business}
\end{figure}

\section{Conclusion}

In conclusion, this work provides a comprehensive framework to address the complex multi-period log-truck routing and scheduling problem faced by the Canadian forestry sector. By developing an enhanced mathematical programming formulation and an efficient decomposition strategy, we tackle the challenges posed by the large-scale and multi-period nature of the problem. The proposed framework incorporates the most common business rules inherent to forestry operations, and through a combination of preprocessing strategies, mathematical programming, and advanced solution techniques such as $\mathcal{RF-FO}$, we demonstrate that near-optimal solutions can be obtained efficiently.  

Computational experiments conducted on real-world industrial data underscore the practical relevance of our approach, showing clear improvements in operational efficiency while contributing to environmental sustainability and economic competitiveness of the forest supply chain. Beyond its immediate results, this work is intended to serve as the kernel of a decision support system for transportation planning and routing, providing forest companies and contractors with a robust optimization engine that can be integrated into their daily operations.  

However, some limitations remain. The current formulation assumes deterministic travel times, loading times, and demand levels, whereas in practice these parameters are subject to uncertainty and variability \cite{simard2024improving}. Incorporating stochastic elements into the model—such as probabilistic travel times, loading durations, and demand fluctuations—represents a natural and important extension of this work.  

Finally, as an avenue for future research, we are developing a second layer of real-time reoptimization to dynamically adapt plans in response to disruptions such as route cancelations, delays, or sudden demand changes. Together, these developments will move the framework closer to a fully operational decision support system capable of ensuring robust, efficient, and sustainable log transportation planning in highly uncertain environments.
\section*{Author contributions statement}
Abdelhakim Abdellaoui: Conceptualization, methodology, 
software, formal analysis, investigation, writing original draft, reviewing \& editing.\\
Issmail El Hallaoui: Supervision, conceptualization, 
reviewing, project administration.\\
Loubna Benabbou: Supervision, conceptualization, 
reviewing, project administration.\\
Fran\c{c}ois Aub\'e: Conceptualization, resources, 
funding acquisition, reviewing.\\
Mouloud Amazouz: Resources, funding acquisition, 
project administration.\\
All authors have read and approved the final version of 
the manuscript.

\section*{Acknowledgements}
During the preparation of this manuscript, the authors used 
ChatGPT (GPT-4o) and Claude (Sonnet) to assist with language editing, text reformulation, and manuscript writing. All content generated by these tools was carefully reviewed, revised, and validated by the authors. The authors take full responsibility for the integrity and accuracy 
of the work.

\section*{Data Availability Statement}
The Python code and other data that support the findings 
of this study are available from the corresponding author, 
Abdelhakim Abdellaoui, upon reasonable request.

\section*{Disclosure Statement}
No potential conflict of interest.
\newcommand{\authorphoto}[1]{%
\includegraphics[width=2.5cm, height=3cm, keepaspectratio=false]{#1}%
}

\section*{Notes on contributors}
\begin{minipage}[t]{0.20\textwidth}
\vspace{0pt}
\authorphoto{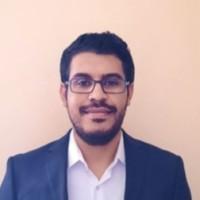}
\end{minipage}
\hfill
\begin{minipage}[t]{0.75\textwidth}
\textbf{Abdelhakim Abdellaoui} is a Ph.D. candidate in the 
Department of Mathematics and Industrial Engineering at 
Polytechnique Montr\'eal, conducting his research at GERAD 
and CanmetENERGY-Varennes, Natural Resources Canada (NRCan). 
His research focuses on mathematical programming, 
decomposition methods, and optimization of large-scale 
routing and scheduling problems, with applications in the 
forestry and transportation sectors. He is also interested 
in the integration of machine learning and deep learning 
techniques within operations research frameworks, 
particularly for combinatorial optimization and 
decision-making under uncertainty.
\end{minipage}
\bigskip
\begin{minipage}[t]{0.20\textwidth}
\vspace{0pt}
\authorphoto{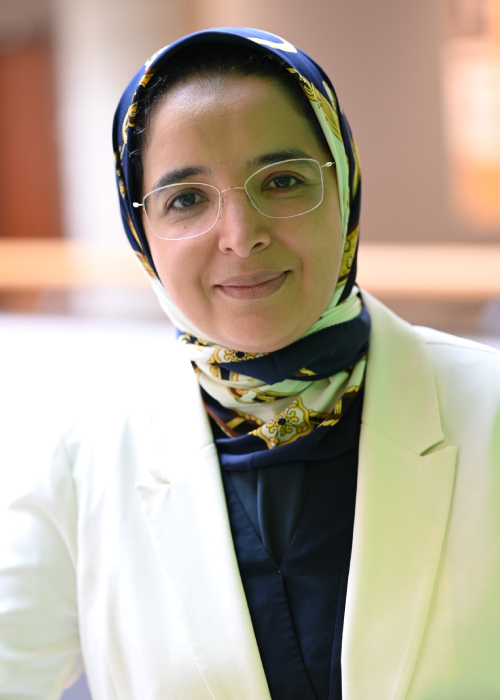}
\end{minipage}
\hfill
\begin{minipage}[t]{0.75\textwidth}
\textbf{Loubna Benabbou} is a Professor at the 
Departement management science, Universit\'e 
du Qu\'ebec \`a Rimouski (UQAR), Campus de L\'evis, 
and a member of GERAD. She holds an M.B.A. and a Ph.D. 
in machine learning and decision sciences from Universit\'e 
Laval, and a degree in industrial engineering from Ecole 
Mohammadia d'Ing\'enieurs. Her research focuses on the 
development and application of machine learning and 
decision science methods for supply chain management, 
industrial process digitalization, and climate change 
risk mitigation.
\end{minipage}
\bigskip
\begin{minipage}[t]{0.18\textwidth}
\vspace{0pt}
\authorphoto{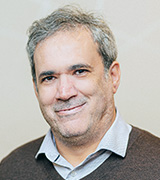}
\end{minipage}
\hfill
\begin{minipage}[t]{0.75\textwidth}
\textbf{Issmail El Hallaoui} is a Full Professor in the 
Department of Mathematics and Industrial Engineering at 
Polytechnique Montr\'eal, and a member of GERAD and IVADO. 
He holds an Ing. degree from ENSIAS (Rabat), and M.Sc. 
and Ph.D. degrees from Polytechnique Montr\'eal. His 
research interests include mathematical programming, 
combinatorial optimization, online optimization and 
approximation algorithms, scheduling and vehicle routing, 
and transportation systems.
\end{minipage}
\bigskip
\begin{minipage}[t]{0.18\textwidth}
\vspace{0pt}
\authorphoto{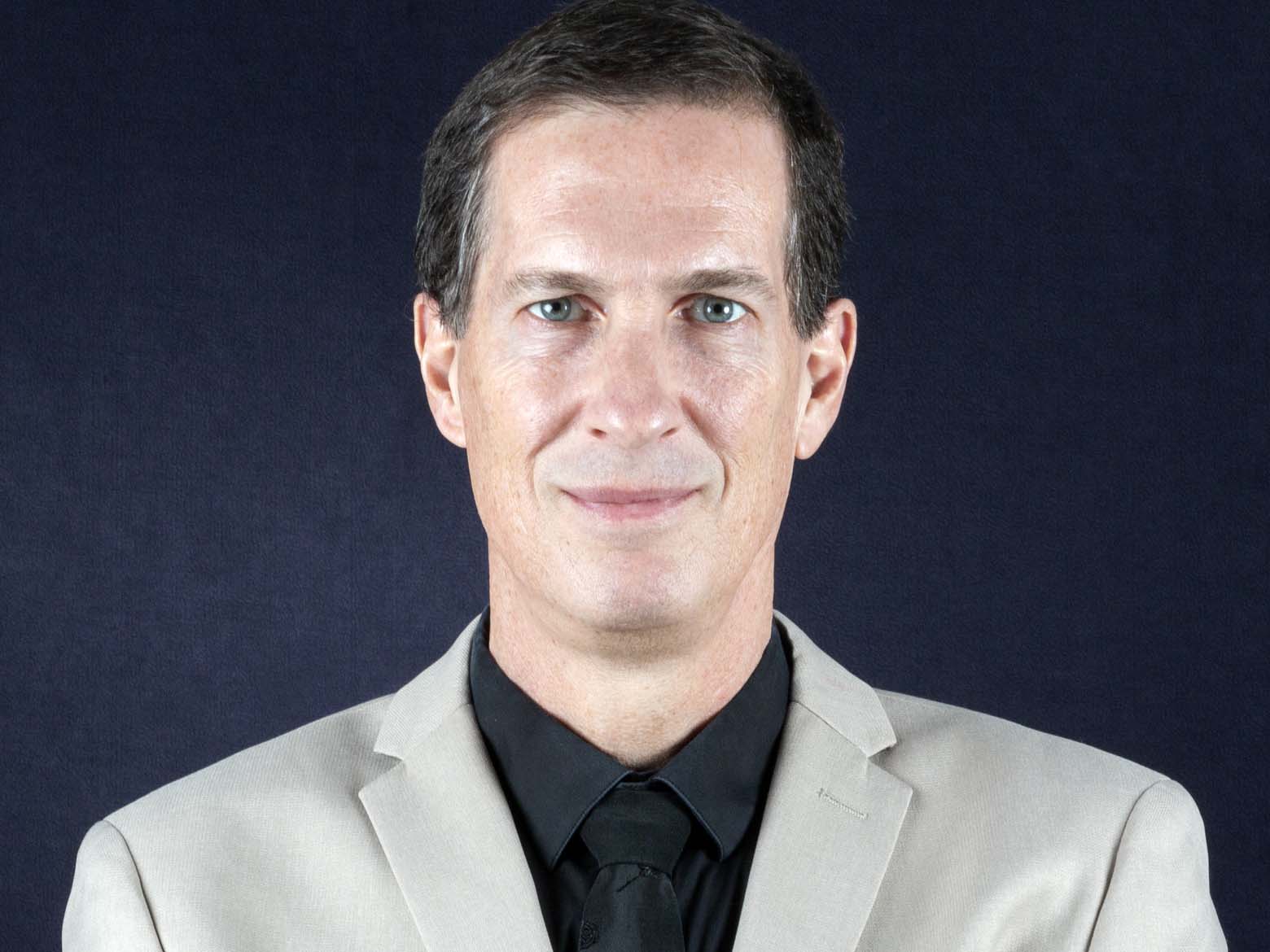}
\end{minipage}
\hfill
\begin{minipage}[t]{0.75\textwidth}
\textbf{Fran\c{c}ois Aub\'e} holds a Ph.D. and is a 
Research Scientist at CanmetENERGY-Varennes, Natural 
Resources Canada (NRCan). His research interests include 
process optimization, machine learning, algorithmic 
development, simulation, and value chain optimization 
in the forestry sector.
\end{minipage}
\bigskip
\begin{minipage}[t]{0.18\textwidth}
\vspace{0pt}
\authorphoto{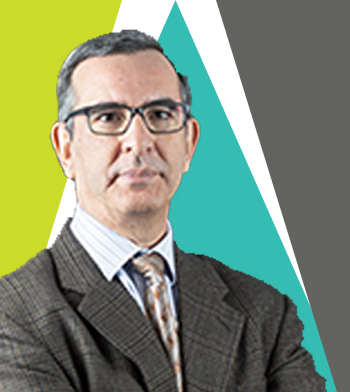}
\end{minipage}
\hfill
\begin{minipage}[t]{0.75\textwidth}
\textbf{Mouloud Amazouz} is a Senior Project Manager at 
CanmetENERGY-Varennes, Natural Resources Canada (NRCan), 
Varennes, Qu\'ebec, Canada. His work focuses on energy 
efficiency, industrial optimization, and the development 
of decision support tools for production and logistics 
systems in the Canadian forestry and energy sectors.
\end{minipage}

\appendix
\section*{ Appendix A. Analytical link between the optimal buffer and the probabilistic buffer.}
\label{appendexA}
We now make explicit the mathematical relationship between the optimal buffer
\( s^\star = \Delta^\star - \overline{L} \) obtained in
Proposition~\ref{prop:frontier} and the probabilistic buffer derived in
Corollary~\ref{cor:cantelli}. This connection is essential to justify both the
necessity of Proposition~\ref{prop:frontier} and the practical relevance of the
corollary.

\subparagraph{Exact expression of the optimal buffer.}
From Proposition~\ref{prop:frontier}, the optimal buffer is given by
\begin{equation}
s^\star
=
\frac{
\frac{\alpha\sigma^2}{2}
+
\sqrt{
\left(\frac{\alpha\sigma^2}{2}\right)^2
+
\alpha\beta\,\overline{L}^{\,2}\sigma^2
}
}{
\beta\,\overline{L}
}.
\label{eq:sstar_exact}
\end{equation}

Factorizing \(\sigma^2\) inside the square root yields
\[
\left(\frac{\alpha\sigma^2}{2}\right)^2
+
\alpha\beta\,\overline{L}^{\,2}\sigma^2
=
\sigma^2\left(
\frac{\alpha^2\sigma^2}{4}
+
\alpha\beta\,\overline{L}^{\,2}
\right).
\]

Taking the square root gives
\[
\sqrt{
\left(\frac{\alpha\sigma^2}{2}\right)^2
+
\alpha\beta\,\overline{L}^{\,2}\sigma^2
}
=
\sigma
\sqrt{
\alpha\beta\,\overline{L}^{\,2}
+
\frac{\alpha^2\sigma^2}{4}
}.
\]

Substituting into \eqref{eq:sstar_exact}, we obtain
\begin{equation}
s^\star
=
\frac{\alpha\sigma^2}{2\beta\overline{L}}
+
\frac{\sigma}{\beta\overline{L}}
\sqrt{
\alpha\beta\,\overline{L}^{\,2}
+
\frac{\alpha^2\sigma^2}{4}
}.
\label{eq:sstar_decomposed}
\end{equation}

\subparagraph{Exact expansion with respect to \(\sigma\).}
Extracting \(\alpha\beta\overline{L}^2\) from the square root in
\eqref{eq:sstar_decomposed} yields
\[
\sqrt{
\alpha\beta\,\overline{L}^{\,2}
+
\frac{\alpha^2\sigma^2}{4}
}
=
\overline{L}\sqrt{\alpha\beta}
\sqrt{
1 + \frac{\alpha\sigma^2}{4\beta\overline{L}^2}
}.
\]

Using the Taylor expansion
\[
\sqrt{1+u} = 1 + \frac{u}{2} + o(u)
\quad \text{as } u \to 0,
\]

We obtain
\[
\sqrt{
1 + \frac{\alpha\sigma^2}{4\beta\overline{L}^2}
}
=
1
+
\frac{\alpha\sigma^2}{8\beta\overline{L}^2}
+
o(\sigma^2).
\]

Substituting the expansion of the square root into
\eqref{eq:sstar_decomposed} yields
\[
s^\star
=
\frac{\alpha\sigma^2}{2\beta\overline{L}}
+
\frac{\sigma}{\beta\overline{L}}
\cdot
\overline{L}\sqrt{\alpha\beta}
\left(
1
+
\frac{\alpha\sigma^2}{8\beta\overline{L}^2}
+
o(\sigma^2)
\right).
\]

We now simplify each term explicitly.
First, observe that
\[
\frac{\sigma}{\beta\overline{L}}
\cdot
\overline{L}\sqrt{\alpha\beta}
=
\sigma\,\frac{\sqrt{\alpha\beta}}{\beta}
=
\sigma\,\sqrt{\frac{\alpha}{\beta}}.
\]

Therefore, the second term becomes
\[
\sigma\,\sqrt{\frac{\alpha}{\beta}}
\left(
1
+
\frac{\alpha\sigma^2}{8\beta\overline{L}^2}
+
o(\sigma^2)
\right).
\]

Expanding the product gives
\[
\sigma\,\sqrt{\frac{\alpha}{\beta}}
+
\sigma\,\sqrt{\frac{\alpha}{\beta}}
\cdot
\frac{\alpha\sigma^2}{8\beta\overline{L}^2}
+
o(\sigma^2).
\]

Since
\[
\sigma\,\sqrt{\frac{\alpha}{\beta}}
\cdot
\frac{\alpha\sigma^2}{8\beta\overline{L}^2}
=
\mathcal{O}(\sigma^3),
\]
this term is absorbed into the \(o(\sigma^2)\) remainder. Collecting all contributions, we obtain
\begin{equation}
s^\star
=
\sigma \sqrt{\frac{\alpha}{\beta}}
+
\frac{\alpha \sigma^2}{2\beta \overline{L}}
+
o(\sigma^2).
\label{eq:sstar_final_expansion}
\end{equation}
In particular,
\begin{equation}
\boxed{s^\star
=
\sigma\sqrt{\frac{\alpha}{\beta}}
+
\mathcal{O}(\sigma^2)}
\quad
\text{as } \sigma \to 0.
\label{eq:sstar_scaling}
\end{equation}
This establishes the exact linear dependence of the optimal buffer on the
service-time standard deviation \(\sigma\).
\section*{ Appendix B. Fractionality of MILP $\mathcal{P}_m$.}
Table~\ref{tab:fractionality-W} reports the fractionality 
rate of routing variables across 20 instances for which the linear relaxation of $\mathcal{P}_m$ is solved. The results indicate that, in all tested cases, the 
proportion of fractional routing variables at optimality remains below $0.5\%$, and is often substantially lower. These empirical values demonstrate that the LP relaxation of the $\mathcal{LTRSP}$
is consistently close to integral across all industrial instances. Only a very small fraction of the binary routing variables require explicit enforcement of integrality, while the vast majority are already fixed at 0 or 1 by the LP relaxation itself.
\begin{table}[ht]
\centering
\caption{Percentage of fractional routing variables in the LP relaxation
for instances $W_1$--$W_{20}$.}
\label{tab:fractionality-W}

\begin{tabular}{c|c c c c c c c c c c}
\hline
Instance & $W_1$ & $W_2$ & $W_3$ & $W_4$ & $W_5$ &
$W_6$ & $W_7$ & $W_8$ & $W_9$ & $W_{10}$ \\
\hline
\textit{Frac(\%)} & 0.42 & 0.37 & 0.18 & 0.25 & 0.31 &
0.40 & 0.21 & 0.16 & 0.33 & 0.29 \\
\hline
\hline
Instance & $W_{11}$ & $W_{12}$ & $W_{13}$ & $W_{14}$ & $W_{15}$ &
$W_{16}$ & $W_{17}$ & $W_{18}$ & $W_{19}$ & $W_{20}$ \\
\hline
\textit{Frac(\%)} & 0.35 & 0.27 & 0.19 & 0.22 & 0.38 &
0.41 & 0.33 & 0.24 & 0.28 & 0.30 \\
\hline
\end{tabular}

\end{table}
\section*{ Appendix C. Illustration of decomposition strategies}
\label{app:decomposition}

We provide a visual illustration of the two 
decomposition strategies employed in this work: the 
\textit{Relax-and-Fix} and the \textit{Fix-and-Optimize} 
heuristics. Both operate by partitioning the planning 
horizon into blocks and iteratively solving sub-problems 
over these blocks.

\subsection*{Relax-and-Fix}

The scheme below illustrates the iterative process of 
the Relax-and-Fix heuristic, which progressively fixes 
decision variables to integer values across planning 
intervals.

\begin{center}
\begin{tikzpicture}
\node at (-1, 2) {First Iteration};
\draw[->][very thick] (0, 3) -- (10, 3) node[right] {Planning Horizon};
\draw[very thick] (0, 2) -- (10, 2);
\foreach \x in {0, 2, 4, 6, 8, 10} \draw[very thick](\x, 1.9) -- (\x, 2.1);
\node at (1, 1.5) {Interval 1};
\node at (3, 1.5) {Interval 2};
\node at (5, 1.5) {Interval 3};
\node at (7, 1.5) {Interval 4};
\node at (9, 1.5) {Interval 5};
\draw[very thick][decorate,decoration={brace,amplitude=10pt,mirror}] 
    (0, 1.3) -- (2, 1.3) node[midway, below=10pt] {Integer Block};
\draw[very thick][decorate,decoration={brace,amplitude=10pt,mirror}] 
    (2.2, 1.3) -- (10, 1.3) node[midway, below=10pt] {Relaxed Block};
\node at (-1, 0) {Second Iteration};
\draw[very thick] (0, 0) -- (10, 0);
\foreach \x in {0, 2, 4, 6, 8, 10} \draw[very thick] (\x, -0.1) -- (\x, 0.1);
\node at (1, -0.5) {Interval 1};
\node at (3, -0.5) {Interval 2};
\node at (5, -0.5) {Interval 3};
\node at (7, -0.5) {Interval 4};
\node at (9, -0.5) {Interval 5};
\draw[very thick][decorate,decoration={brace,amplitude=10pt,mirror}] 
    (0, -0.7) -- (1.9, -0.7) node[midway, below=10pt] {Fixed Block};
\draw[very thick][decorate,decoration={brace,amplitude=10pt,mirror}] 
    (2.1, -0.7) -- (3.9, -0.7) node[midway, below=10pt] {Integer Block};
\draw[very thick][decorate,decoration={brace,amplitude=10pt,mirror}] 
    (4.1, -0.7) -- (10, -0.7) node[midway, below=10pt] {Relaxed Block};
\node at (-1, -2) {Last Iteration};
\draw[very thick] (0, -2) -- (10, -2);
\foreach \x in {0, 2, 4, 6, 8, 10} \draw (\x, -2.1) -- (\x, -1.9);
\node at (1, -2.5) {Interval 1};
\node at (3, -2.5) {Interval 2};
\node at (5, -2.5) {Interval 3};
\node at (7, -2.5) {Interval 4};
\node at (9, -2.5) {Interval 5};
\draw[very thick][decorate,decoration={brace,amplitude=10pt,mirror}] 
    (0, -2.7) -- (7.9, -2.7) node[midway, below=10pt] {Fixed Block};
\draw[very thick][decorate,decoration={brace,amplitude=10pt,mirror}] 
    (8.1, -2.7) -- (10, -2.7) node[midway, below=10pt] {Integer Block};
\end{tikzpicture}
\end{center}

\subsection*{Fix-and-Optimize}

The scheme below illustrates the Fix-and-Optimize heuristic, 
which refines an existing solution by alternately fixing 
and re-optimizing blocks of variables across the planning 
horizon.

\begin{center}
\begin{tikzpicture}
\node at (-1, 2) {First Iteration};
\draw[->][very thick] (0, 3) -- (10, 3) node[right] {Planning Horizon};
\draw[very thick] (0, 2) -- (10, 2);
\foreach \x in {0, 2, 4, 6, 8, 10} \draw[very thick](\x, 1.9) -- (\x, 2.1);
\node at (1, 1.5) {Interval 1};
\node at (3, 1.5) {Interval 2};
\node at (5, 1.5) {Interval 3};
\node at (7, 1.5) {Interval 4};
\node at (9, 1.5) {Interval 5};
\draw[very thick][decorate,decoration={brace,amplitude=10pt,mirror}] 
    (1.1, 1.3) -- (3, 1.3) node[midway, below=10pt] {\,\,\,\,\,Integer Block};
\draw[very thick][decorate,decoration={brace,amplitude=10pt,mirror}] 
    (3.2, 1.3) -- (10, 1.3) node[midway, below=10pt] {Fixed Block};
\draw[very thick][decorate,decoration={brace,amplitude=10pt,mirror}] 
    (0, 1.3) -- (0.9, 1.3) node[midway, below=10pt] {Fixed Block};
\node at (-1, -0.5) {$n$-th Iteration};
\draw[very thick] (0, -0.5) -- (10, -0.5);
\foreach \x in {0, 2, 4, 6, 8, 10} \draw (\x, -0.6) -- (\x, -0.4);
\node at (1, -1) {Interval 1};
\node at (3, -1) {Interval 2};
\node at (5, -1) {Interval 3};
\node at (7, -1) {Interval 4};
\node at (9, -1) {Interval 5};
\draw[very thick][decorate,decoration={brace,amplitude=10pt,mirror}] 
    (0, -1.7) -- (4.8, -1.7) node[midway, below=10pt] {Fixed Block};
\draw[very thick][decorate,decoration={brace,amplitude=10pt,mirror}] 
    (5, -1.7) -- (6.8, -1.7) node[midway, below=10pt] {Integer Block};
\draw[very thick][decorate,decoration={brace,amplitude=10pt,mirror}] 
    (6.9, -1.7) -- (10, -1.7) node[midway, below=10pt] {Fixed Block};
\end{tikzpicture}
\end{center}

\begin{thebibliography}{99}\small
\bibitem{ronnqvist2015operations}Rönnqvist, M., D’Amours, S., Weintraub, A., Jofre, A., Gunn, E., Haight, R., Martell, D., Murray, A. \& Romero, C. Operations research challenges in forestry: 33 open problems. {\em Annals Of Operations Research}. \textbf{232} pp. 11-40 (2015)
\bibitem{NRCan2022} Natural Resources Canada. (2022). The State of Canada's Forests Annual Report 2022.
\bibitem{StatCan2023} Statistics Canada. (2023). Lumber production, shipments and stocks, by Canada and provinces, monthly.
\bibitem{audy2013virtual}Audy, J., D'Amours, S., Favreau, J. \& Rousseau, L. Virtual transportation manager: a decision support system for collaborative forest transportation. (CIRRELT Montreal,2013
\bibitem{audy2023planning}Audy, J., Rönnqvist, M., D’Amours, S. \& Yahiaoui, A. Planning methods and decision support systems in vehicle routing problems for timber transportation: a review. {\em International Journal Of Forest Engineering}. \textbf{34}, 143-167 (2023)
\bibitem{d2008using}D'amours, S., Rönnqvist, M. \& Weintraub, A. Using operational research for supply chain planning in the forest products industry. {\em INFOR: Information Systems And Operational Research}. \textbf{46}, 265-281 (2008)
\bibitem{ronnqvist2003optimization}Rönnqvist, M. Optimization in forestry. {\em Mathematical Programming}. \textbf{97} pp. 267-284 (2003)
\bibitem{Ronnqvist2003} Ronnqvist, M. (2003). Optimization models for forestry. \textit{Forest Science}, 49(3), 345-355.
\bibitem{damavsevivcius2024digital}Damaševičius, R., Mozgeris, G., Kurti, A. \& Maskeliūnas, R. Digital transformation of the future of forestry: an exploration of key concepts in the principles behind Forest 4.0. {\em Frontiers In Forests And Global Change}. \textbf{7} pp. 1424327 (2024)
\bibitem{Flisberg2009}Flisberg, P., Ronnqvist, M. (2009). A decision support system for wood flow problems. \textit{European Journal of Operational Research}, 194(2), 490-502.

\bibitem{Andersson2008} Andersson, K., Ronnqvist, M. (2008). RuttOpt: A decision support system for routing logging trucks. \textit{Operations Research}, 56(4), 1010-1022.

\bibitem{Nadimi2015} Nadimi, S., Ronnqvist, M. (2015). Simulated annealing for the wood chip transportation problem. \textit{Computers and Operations Research}, 62, 1-10.
\bibitem{Forsberg2005} Forsberg, J.,  Ronnqvist, M. (2005). FlowOpt: A decision support tool for strategic and tactical transportation planning in forestry. \textit{Forest Policy and Economics}, 7(5), 765-776.
\bibitem{Ronnqvist2023} Ronnqvist, M. (2023). Hybrid methods for forest transportation distance calculations. \textit{Journal of Forestry Research}, 34(1), 1-15.
\bibitem{audy2023review} Audy, J.-F., Rönnqvist, M., D’Amours, S., Yahiaoui, A.-E. (2023). \textit{Planning methods and decision support systems in vehicle routing problems for timber transportation: A review}. International Journal of Forest Engineering, 34(2), 143–167.
\bibitem{yahiaoui2024mathheuristic} Yahiaoui, A.-E., Rönnqvist, M., Audy, J.-F. (2024). \textit{A Mathheuristic Approach for the Vehicle Routing Problem with Queuing Considerations}. CIRRELT.
\bibitem{weintraub1996simulation} Weintraub, A., Epstein, R., Morales, R., Seron, J., Traverso, P. (1996). \textit{A truck scheduling system improves efficiency in the forest industries}. Interfaces, 26(4), 1–12.
\bibitem{rönnqvist2003models} Rönnqvist, M. (2003). \textit{Optimization models for forest planning}. European Journal of Operational Research, 150(2), 115–130.
\bibitem{palmgren2004column} Palmgren, M., Rönnqvist, M., Varbrand, P. (2004). \textit{A column generation algorithm for a combined timetabling and transportation problem}. European Journal of Operational Research, 156(1), 69–92
\bibitem{shen1978log}Shen, Z. Log truck scheduling by network programming.  (1978)
.

\bibitem{robinson1994tour}Robinson, T. Tour generation for log truck scheduling. {\em 30th Annual Conference Of The Operational Research Society Of New Zealand, August}. pp. 166-171 (1994)
\bibitem{audy2011weekly} Audy, J.-F., D’Amours, S., Rönnqvist, M. (2011). \textit{An operational weekly planning approach for forest transportation with a heterogeneous fleet}. European Journal of Operational Research, 215(2), 329–339.
\bibitem{el_hachemi2013decomposition} El Hachemi, N., Rönnqvist, M., Flisberg, P. (2013). \textit{Decomposition approaches for weekly log truck scheduling}. European Journal of Operational Research, 224(2), 472–480.
\bibitem{ronnqvist1995real} Rönnqvist, M., Ryan, D. M. (1995). \textit{Solving truck dispatching problems in real-time}. European Journal of Operational Research, 83(1), 94–110.
\bibitem{marques2014hybrid} Marques, A., Pinho de Sousa, J.,  Ferreira, C. (2014). \textit{A hybrid approach for tactical and operational forest transportation planning}. European Journal of Operational Research, 236(2), 681–693.
\bibitem{joncour2023generalized}Joncour, C., Kritter, J., Michel, S. \& Schepler, X. Generalized relax-and-fix heuristic. {\em Computers \& Operations Research}. \textbf{149} pp. 106038 (2023)
\bibitem{rey2009column}Rey, P., Muñoz, J. \& Weintraub, A. A column generation model for truck routing in the Chilean forest industry. {\em INFOR: Information Systems And Operational Research}. \textbf{47}, 215-221 (2009)
\bibitem{el2011hybrid}El Hachemi, N., Gendreau, M. \& Rousseau, L. A hybrid constraint programming approach to the log-truck scheduling problem. {\em Annals Of Operations Research}. \textbf{184}, 163-178 (2011)
\bibitem{haridass2014scheduling}Haridass, K., Valenzuela, J., Yucekaya, A. \& McDonald, T. Scheduling a log transport system using simulated annealing. {\em Information Sciences}. \textbf{264} pp. 302-316 (2014)
\bibitem{rix2015column}Rix, G., Rousseau, L. \& Pesant, G. A column generation algorithm for tactical timber transportation planning. {\em Journal Of The Operational Research Society}. \textbf{66}, 278-287 (2015)
\bibitem{asmussen2003applied}Asmussen, S. Applied probability and queues. (Springer,2003)
\bibitem{gil2016log}Gil, A. \& Frayret, J. Log classification in the hardwood timber industry: method and value analysis. {\em International Journal Of Production Research}. \textbf{54}, 4669-4688 (2016)
\bibitem{cantelli1929sui}Cantelli, F. Sui confini della probabilita. {\em Atti Del Congresso Internazionale Dei Matematici: Bologna Del 3 Al 10 De Settembre Di 1928}. pp. 47-60 (1929)
\bibitem{kingman1961single}Kingman, J. The single server queue in heavy traffic. {\em Mathematical Proceedings Of The Cambridge Philosophical Society}. \textbf{57}, 902-904 (1961)
\bibitem{bordon2020mixed}Bordon, M., Montagna, J. \& Corsano, G. Mixed integer linear programming approaches for solving the raw material allocation, routing and scheduling problems in the forest industry. (Growing Science,2020)
\bibitem{acuna2017timber}Acuna, M. Timber and biomass transport optimization: A review of planning issues, solution techniques and decision support tools. {\em Croatian Journal Of Forest Engineering: Journal For Theory And Application Of Forestry Engineering}. \textbf{38}, 279-290 (2017)
\bibitem{melchiori2022mathematical}Melchiori, L., Nasini, G., Montagna, J. \& Corsano, G. A mathematical modeling for simultaneous routing and scheduling of logging trucks in the forest supply chain. {\em Forest Policy And Economics}. \textbf{136} pp. 102693 (2022)
\bibitem{vitale2021optimizing}Vitale, I., Broz, D. \& Dondo, R. Optimizing log transportation in the Argentinean forest industry by column generation. {\em Forest Policy And Economics}. \textbf{128} pp. 102483 (2021)
\bibitem{amrouss2017real}Amrouss, A., El Hachemi, N., Gendreau, M. \& Gendron, B. Real-time management of transportation disruptions in forestry. {\em Computers \& Operations Research}. \textbf{83} pp. 95-105 (2017)
\bibitem{ghotb2024optimization}Ghotb, S., Sowlati, T. \& Mortyn, J. An optimization model for detailed scheduling of heterogeneous fleet of log trucks considering synchronization. {\em International Journal Of Forest Engineering}. \textbf{35}, 313-325 (2024)
\bibitem{wang2023relax}Wang, S., Zhang, H., Chu, F. \& Yu, L. A relax-and-fix method for clothes inventory balancing scheduling problem. {\em International Journal Of Production Research}. \textbf{61}, 7085-7104 (2023)
\bibitem{brahimi2015integrating}Brahimi, N., Aouam, T. \& Aghezzaf, E. Integrating order acceptance decisions with flexible due dates in a production planning model with load-dependent lead times. {\em International Journal Of Production Research}. \textbf{53}, 3810-3822 (2015)
\bibitem{simard2024improving}Simard, V., Rönnqvist, M., LeBel, L. \& Lehoux, N. Improving the decision-making process by considering supply uncertainty–a case study in the forest value chain. {\em International Journal Of Production Research}. \textbf{62}, 665-684 (2024)
\end{thebibliography}
\end{document}